\documentclass[reqno,11pt]{amsart}
\usepackage{amssymb}
\usepackage{amsmath}
\usepackage{amsfonts}
\usepackage{microtype}
\usepackage[canadian]{babel}
\usepackage{xcolor}
\usepackage{geometry}

\definecolor{myurlcolor}{rgb}{0,0,0.4}
\definecolor{mycitecolor}{rgb}{0,0.5,0}
\definecolor{myrefcolor}{rgb}{0.5,0,0}
\usepackage[pagebackref,draft=false]{hyperref}
\hypersetup{colorlinks,
linkcolor=myrefcolor,
citecolor=mycitecolor,
urlcolor=myurlcolor}

\usepackage[capitalize]{cleveref}

\newcommand{\beq}{\begin{equation}}
\newcommand{\eeq}{\end{equation}}
\newcommand{\N}{\mathbb{N}}
\newcommand{\Nplus}{\mathbb{N}_{> 0}}			

\newcommand{\Q}{\mathbb{Q}}

\newcommand{\R}{\mathbb{R}}
\newcommand{\C}{\mathbb{C}}

\newcommand{\eps}{\varepsilon}

\newcommand{\rk}[1]{\mathrm{rk}(#1)}			
\newcommand{\im}[1]{\mathrm{im}(#1)}
\newcommand{\sr}[1]{\mathsf{#1}}	

\newcommand{\Graph}{\sr{Graph}}
\newcommand{\FGraph}{\sr{FinGraph}}

\newcommand{\frachom}{\rightsquigarrow} 
\newcommand{\graphinv}[1]{\eta_{#1}} 
\newcommand{\graphinvfrac}[1]{\eta^{\mathfrak{frac}}_{#1}} 
\newcommand{\graphinvasymp}[1]{\eta^{\infty}_{#1}} 

\theoremstyle{plain}
\newtheorem{thm}{Theorem}[section]\Crefname{thm}{Theorem}{Theorems}
\newtheorem{lem}[thm]{Lemma}\Crefname{lem}{Lemma}{Lemmas}
\newtheorem{prop}[thm]{Proposition}\Crefname{prop}{Proposition}{Propositions}
\newtheorem{cor}[thm]{Corollary}\Crefname{cor}{Corollary}{Corollaries}
\Crefname{conj}{Conjecture}{Conjectures}
\Crefname{qstn}{Question}{Questions}
\newtheorem{defn}[thm]{Definition}\Crefname{defn}{Definition}{Definitions}
\newtheorem{prob}[thm]{Problem}\Crefname{prob}{Problem}{Problems}
\theoremstyle{remark}
\newtheorem{ex}[thm]{Example}\Crefname{ex}{Example}{Examples}
\newtheorem{rem}[thm]{Remark}\Crefname{rem}{Remark}{Remarks}
\Crefname{note}{Note}{Notes}
\numberwithin{equation}{section}


\allowdisplaybreaks

\setlength{\jot}{6pt}

\let\originalleft\left
\let\originalright\right
\renewcommand{\left}{\mathopen{}\mathclose\bgroup\originalleft}
\renewcommand{\right}{\aftergroup\egroup\originalright}

\protected\def\verythinspace{%
	\ifmmode
		\mskip0.5\thinmuskip
	\else
		\ifhmode
			\kern0.08334em
		\fi
	\fi}
\renewcommand{\,}{\verythinspace}

\setcounter{tocdepth}{1}

\usepackage{todonotes}


\begin{document}

\setlength{\jot}{6pt}




\title{A unified construction of semiring-homomorphic graph invariants}

\author{Tobias Fritz}

\address{Perimeter Institute for Theoretical Physics, Waterloo, Ontario, Canada}
\email{tfritz@pitp.ca}

\keywords{}

\subjclass[2010]{Primary: 05C72, 05C69; Secondary: 06F25}

\thanks{\textit{Acknowledgements.} Most of this work was conducted while the author was affiliated with the Max Planck Institute for Mathematics in the Sciences. We thank Chris Cox, Chris Godsil, Matilde Marcolli, David Roberson, and Jeroen Zuiddam for various combinations of useful discussions and detailed feedback on a draft.}

\begin{abstract}
It has recently been observed by Zuiddam that finite graphs form a preordered commutative semiring under the graph homomorphism preorder together with join and disjunctive product as addition and multiplication, respectively. This led to a new characterization of the Shannon capacity $\Theta$ via Strassen's Positivstellensatz: $\Theta(\bar{G}) = \inf_f f(G)$, where $f : \Graph \to \R_+$ ranges over all monotone semiring homomorphisms.

Constructing and classifying graph invariants $\Graph \to \R_+$ which are monotone under graph homomorphisms, additive under join, and multiplicative under disjunctive product is therefore of major interest. We call such invariants \emph{semiring-homomorphic}. The only known such invariants are all of a \emph{fractional} nature: the fractional chromatic number, the projective rank, the fractional Haemers bounds, as well as the Lov\'asz number (with the latter two evaluated on the complementary graph). Here, we provide a unified construction of these invariants based on \emph{linear-like semiring families} of graphs. Along the way, we also investigate the additional algebraic structure on the semiring of graphs corresponding to fractionalization.

Linear-like semiring families of graphs are a new notion of combinatorial geometry different from matroids which may be of independent interest.
\end{abstract}

\maketitle

\tableofcontents

\section{Introduction}

The Shannon capacity of graphs is a notorious graph invariant with high relevance to information theory~\cite{KO}. For technical convenience we state the definition of the Shannon capacity of the complementary graph,
\[
	\Theta(\bar{G}) := \sup_{n\in\N} \sqrt[n]{\omega(G^{\ast n})}.
\]
Here, $\omega$ is the clique number, and $G^{\ast n}$ is the $n$-fold disjunctive product of $G$ with itself. Although the very definition of $\Theta(\bar{G})$ provides an algorithm for computing a sequence of lower bounds converging to $\Theta(\bar{G})$, already finding an algorithm computing a convergent sequence of upper bounds seems to be an open problem. In other words, the computability of $\Theta(\bar{G})$ itself is open. The simplest graph for which the Shannon capacity is not known is the 7-cycle~\cite{7cycle}.

As an application of Strassen's Positivstellensatz~\cite[Corollary~2.6]{strassen}, it was recently shown by Zuiddam how to obtain a tight family of upper bounds on the Shannon capacity, improving substantially on our earlier characterization~\cite[Example~8.25]{fritz}.

\begin{thm}[Zuiddam]
The Shannon capacity satisfies
\beq
\label{shannon_capacity1}
	\Theta(\bar{G}) = \inf_{\eta} \eta(G),
\eeq
where $\eta$ ranges over all graph invariants $\eta : \Graph \to \R_+$ which satisfy the normalization $\eta(K_1) = 1$ as well as the following conditions:
\begin{itemize}
	\item $\eta$ is monotone under graph homomorphisms:
		\[
			G\to H \qquad\Longrightarrow\qquad \eta(G) \leq \eta(H);
		\]
	\item $\eta$ is additive under graph joins:
		\[
			\eta(G + H) = \eta(G) + \eta(H);
		\]
	\item $\eta$ is multiplicative under disjunctive products:
		\[
			\eta(G \ast H) = \eta(G) \eta(H).
		\]
\end{itemize}
\label{Theta_inf1}
\end{thm}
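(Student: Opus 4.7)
\emph{Proof proposal.} The plan is to prove the two inequalities $\Theta(\bar{G}) \leq \inf_\eta \eta(G)$ and $\Theta(\bar{G}) \geq \inf_\eta \eta(G)$ separately. The first is a direct consequence of the three axioms on $\eta$, whereas the second is the real content and is extracted from Strassen's Positivstellensatz applied to the graph semiring.

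For the easy direction, I would fix any admissible $\eta$ and any $n \in \N$, and set $m := \omega(G^{\ast n})$, which yields a homomorphism $K_m \to G^{\ast n}$. Monotonicity and multiplicativity give $\eta(K_m) \leq \eta(G^{\ast n}) = \eta(G)^n$, while writing $K_m$ as an $m$-fold graph join of $K_1$ and using additivity together with the normalization $\eta(K_1)=1$ yields $\eta(K_m) = m$. Hence $\omega(G^{\ast n})^{1/n} \leq \eta(G)$ for every $n$, and taking the supremum gives $\Theta(\bar{G}) \leq \eta(G)$.

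For the reverse direction, I would first observe that the isomorphism classes of finite graphs, preordered by the existence of graph homomorphisms and equipped with join as addition and disjunctive product as multiplication, form a preordered commutative semiring fitting into the framework of Strassen's Positivstellensatz. The only non-trivial hypothesis is \emph{power universality} of some distinguished element, for which $K_2$ works: since $K_2^{\ast n} \cong K_{2^n}$ and any graph $G$ admits a homomorphism $G \to K_{\chi(G)}$, one obtains $G \to K_2^{\ast n}$ as soon as $2^n \geq \chi(G)$. Applied to the pair $(K_1, G)$, Strassen's theorem then identifies the asymptotic rate
\[
	\sup \bigl\{ \sqrt[n]{m} : K_m \to G^{\ast n} \bigr\} \;=\; \sup_n \sqrt[n]{\omega(G^{\ast n})} \;=\; \Theta(\bar{G})
\]
with the infimum $\inf_\eta \eta(G)$ ranging over normalized monotone semiring homomorphisms, which is exactly the desired equality.

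The main obstacle is neither inequality in isolation but rather the translation between the combinatorial quantity $\Theta(\bar{G})$ and the abstract asymptotic rate governed by Strassen's theorem, together with the verification that the graph semiring satisfies the standing hypotheses of that theorem (commutativity, compatibility of the preorder with $+$ and $\ast$, and power universality). Once this dictionary is in place, Zuiddam's theorem is an almost immediate specialization, which is precisely the appeal of the observation.
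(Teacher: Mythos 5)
Your proposal is correct and follows essentially the same route as the paper: both directions ultimately rest on Strassen's Positivstellensatz applied to the preordered commutative semiring of finite graphs, with the hypotheses (scalars $K_n$, power universality via $G \to K_{\chi(G)} \to K_2^{\ast n}$) verified exactly as in the discussion following \cref{spss_better}. The only cosmetic difference is that the paper deduces the result from the more general rate formula of \cref{rate_formula} specialized to $R(G\to K_2) = \log_2\Theta(\bar G)$, whereas you work directly with the asymptotic subrank $\sup_n \sqrt[n]{\omega(G^{\ast n})}$ and spell out the elementary inequality $\Theta(\bar G)\le\eta(G)$, which the paper leaves to the cited references.
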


Here, $\Graph$ is the set of all (isomorphism classes of) finite graphs. So if a complete classification of such graph invariants was available, our understanding of the Shannon capacity would improve dramatically. For example, if these invariants could even be enumerated algorithmically, then we would have an algorithm for computing $\Theta$.

Graph invariants satisfying these three conditions are therefore of major interest, and we dedicate the present paper to their study. It is convenient to introduce a new term for them; we call them \textbf{semiring-homomorphic} graph invariants, since the preservation of addition and multiplication is what characterizes semiring homomorphisms. The most basic examples of semiring-homomorphic graph invariants are the fractional chromatic number $\chi_f$ and the complementary Lov\'asz number $G\mapsto \vartheta(\bar{G})$~\cite{lovasz}. Zuiddam observed that the only known additional examples are the projective rank~\cite{MR} and the fractional Haemers bounds of~\cite{bukh_cox}, both also applied to the complement.

What we achieve in this paper is a general construction of semiring-homomorphic graph invariants which recovers all the known examples. Curiously, all of these are of a \emph{fractional} nature, and in particular are monotone not only under graph homomorphisms, but even under the more abundant fractional graph homomorphisms (in our sense, \cref{frac_mon}). Correspondingly, our considerations involve fractional graph theory~\cite{SU} in an essential way.

We now give a brief outline of our construction. Many graph invariants $\eta$ are constructed by \emph{labelling} the vertices of a graph by certain other objects, in such a way that the labels associated to adjacent vertices satisfy a certain relation. Essentially by definition, this type of labelling is secretly a graph homomorphism, namely to the graph with allowed labels as vertices and the specified relation as adjacency. It has been well-recognized that fractionalizing a graph invariant $\eta$ amounts to considering vertex labels given by \emph{sets} of labels of cardinality $d$, dividing the resulting invariant by $d$, and taking the limit $d\to\infty$; see e.g.~\cite[Section~1.2]{HPRS}. Here we formalize the general pattern behind this construction: if $F = (F_n)_{n\in\N}$ is a sequence of graphs, then the \emph{$F$-number} $\graphinv{F}$ is the graph invariant given by
\[
	\graphinv{F}(G) := \min\ \{\: n\in\N \mid G \to F_n \:\},
\]
where $G \to F_n$ means that there exists a graph homomorphism from $G$ to $F_n$, i.e.~a labelling of the vertices of $G$ by the vertices of $F_n$ such that adjacent vertices have labels that are adjacent in $F_n$. The corresponding fractional invariant is
\[
	\graphinvfrac{F}(G) := \inf\ \left\{\: \frac{n}{d} \:\Big|\: G \to F_n/d \:\right\},
\]
where $F_n/d$ for $d,n\in\Nplus$ is the graph of $d$-cliques in $F_n$ (\cref{def_frac}). This formalizes the intuitive idea that a $d$-fractional labelling assigns to each vertex a set of labels of cardinality $d$. It specializes to the definition of the fractional chromatic number in the case where $F_n = K_n$ is the complete graph (\cref{frac_chrom}).

We find that this definition makes the resulting fractional invariant $\graphinvfrac{F}$ into a semiring-homomorphic invariant as soon as the family of graphs $(F_n)$ satisfies a number of conditions that make it into a \emph{linear-like semiring family} (\cref{srfam,ll}). The role played by these graph families is akin to that of the family of Euclidean spaces $(\R^n\setminus\{0\})$, with orthogonality as adjacency, and has a strong geometrical flavour. We therefore think of linear-like semiring families of graphs as a new concept of combinatorial geometry, distinct from matroid theory, which could be of independent interest.

\subsection*{Summary}

In \cref{algebra}, we introduce our setting, emphasizing that our graphs can also be infinite. We recall the best known version of Strassen's Positivstellensatz as \cref{spss_better} and apply it to $\Graph$, the preordered semiring of graphs, in order to state Zuiddam's \cref{Theta_inf1} as \cref{Theta_inf2} and the more general \cref{rate_formula} on rates. In \cref{sec_frac}, we consider operations of blowup and fractionalization of graphs as additional algebraic structure on $\Graph$. In \cref{sec_srfams}, we introduce semiring families of graphs $F = (F_n)$ and prove \cref{mainthm} as our main result, stating in particular that $\graphinvfrac{F}$ is semiring-homomorphic. The proof mirrors the more specific arguments of Bukh and Cox~\cite{bukh_cox} in our more general situation. In \cref{sec_recover}, we show how this recovers all known examples of semiring-homomorphic graph invariants as particular instances, thereby also giving the first (and so far only) examples of linear-like semiring families. In \cref{sec_ops}, we end with a discussion of some open problems.

\section{The preordered commutative semiring of graphs}
\label{algebra}

\begin{defn}
\label{def_graph}
Throughout this paper, a \emph{graph} $G$ is an undirected simple graph $G = (V(G), E(G))$, where both the set of vertices $V(G)$ and the set of edges $E(G)$ may be infinite. As usual, we denote adjacency $(v,w) \in E(G)$ for vertices $v,w\in V(G)$ by $v\sim w$.
\end{defn}

So in contrast to~\cite{zuiddam}, we also include \emph{infinite} graphs in some of our considerations. We do this in order to obtain e.g.~the Lov\'asz number (\cref{lovasz}) from our construction, although it is open whether infinite graphs are really necessary in order to do so (\cref{fin_vs_infin}).

We now get to the definition of $\Graph$ (resp.~$\FGraph$), the preordered commutative semiring of (finite) graphs. The essential pieces of structure on $\Graph$ and $\FGraph$ are the following standard definitions~\cite{SU,GR}:

\begin{defn}
Let $G$ and $H$ be graphs.
\begin{enumerate}
\item A \emph{graph homomorphism} $f : G\to H$ is a map $f : V(G) \to V(H)$ such that if $v\sim w$ in $G$, then also $f(v) \sim f(w)$ in $H$.
\item The \emph{graph join} $G + H$ is the graph with vertex set the disjoint union $V(G + H) := V(G) \sqcup V(H)$, the induced edges on each summand, and $v\sim w$ for every $v\in V(G)$ and $w\in V(H)$.
\item The \emph{disjunctive product} $G \ast H$ is the graph with vertex set the cartesian product $V(G\ast H) := V(G) \times V(H)$, and $(v,w) \sim (v',w')$ if and only if $v\sim v'$ or $w\sim w'$.
\item The \emph{lexicographic product} $G\ltimes H$ is the graph with vertex set the cartesian product $V(G\ltimes H) := V(G)\times V(H)$, and $(v,w) \sim (v',w')$ if and only if $v\sim v'$, or $v = v'$ and $w \sim w'$.
\end{enumerate}
\end{defn}

We write $G\to H$ to denote the statement that there is a graph homomorphism from $G$ to $H$. For example, we always have $G\ltimes H \to G \ast H$. We say that two graphs $G$ and $H$ are \emph{homomorphically equivalent} if $G\to H$ and $H\to G$, and denote this by $G\simeq H$; we occasionally also write $G\cong H$ to denote graph isomorphism. We will frequently use the \emph{clique number}
\[
	\omega(G) := \sup\, \{\: n\in\N \mid K_n \to G \:\}
\]
and the \emph{chromatic number}
\[
	\chi(G) := \inf\, \{\: n\in\N \mid G \to K_n \:\}.
\]
Both of these invariants are additive in the sense that $\omega(G + H) = \omega(G) + \omega(H)$ and $\chi(G + H) = \chi(G) + \chi(H)$. For infinite graphs, either or both of these may be infinite.

Besides working with potentially infinite graphs, we additionally deviate in our conventions slightly from~\cite{zuiddam} in that our definitions correspond to those of Zuiddam under taking graph complements. Our reasons are that this allows us not to take complements when talking about graph homomorphisms, and that this results in a more straightforward information-theoretic interpretation in terms of zero-error communication~\cite{shaye_wang}.

We write $\Graph$ for the collection of all isomorphism classes of graphs. $\Graph$ becomes a preordered set\footnote{Recall that a preorder ``$\leq$'' is a reflexive and transitive relation, not necessarily antisymmetric.} if we put $G\leq H$ if and only if there is a graph homomorphism $G\to H$. Even upon considering finite graphs only, the resulting preordered set is known to be highly complex~\cite[Section~IV.3]{PT}. In order to keep our notation intuitive, we will continue to write $G\to H$ in order to indicate the existence of a graph homomorphism from $G$ to $H$ and avoid the notation $G\leq H$. We refer to~\cite{zuiddam,our_SPSS} for the definition of preordered commutative semiring. The following is again due to Zuiddam~\cite{zuiddam} in the case of finite graphs.

\begin{prop}
With graph join as addition and disjunctive product as multiplication, $\Graph$ is a preordered commutative semiring.
\end{prop}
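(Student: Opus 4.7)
The plan is to unpack the definition of a preordered commutative semiring and verify each axiom for the structure on $\Graph$ defined above. Since none of the relevant constructions (join, disjunctive product, graph homomorphism) depend on finiteness of the graphs involved, the argument will be the same as Zuiddam's for $\FGraph$; the only real purpose of the proof is to confirm that infiniteness is indeed irrelevant.

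First I would fix the neutral elements: the empty graph as additive identity $0$, and the single-vertex graph $K_1$ as multiplicative identity $1$. Commutativity and associativity of the join $+$ are immediate from the symmetric definition via disjoint union of vertex sets together with edges between every vertex of one summand and every vertex of the other. Commutativity and associativity of the disjunctive product $\ast$ follow from the corresponding properties of the Cartesian product of vertex sets together with the symmetry and associativity of the logical \emph{or} in the adjacency clause $(v,w) \sim (v',w') \iff v\sim v'\,\text{or}\,w\sim w'$. The annihilation property $0 \ast G \simeq 0$ is also immediate, since $V(0) = \emptyset$ forces $V(0 \ast G) = \emptyset$.

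The main bookkeeping step is distributivity: I would exhibit the canonical isomorphism $G \ast (H + K) \cong (G\ast H) + (G\ast K)$ by identifying vertex sets via $V(G)\times (V(H) \sqcup V(K)) \cong (V(G)\times V(H)) \sqcup (V(G)\times V(K))$ and checking adjacency case by case. Within each summand of the right-hand side, adjacency coincides with that of the corresponding disjunctive product on the left. Between the two summands, any two vertices of the form $(g_1, h)$ with $h\in V(H)$ and $(g_2, k)$ with $k\in V(K)$ are adjacent on both sides: on the right by the join, and on the left because $h\sim k$ holds in $H + K$ by the definition of the join, forcing $(g_1,h) \sim (g_2,k)$ in $G \ast (H + K)$ regardless of whether $g_1 \sim g_2$.

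Finally, I would check that $G \leq H :\iff G \to H$ is a preorder compatible with both operations. Reflexivity and transitivity come from the identity homomorphism and composition of homomorphisms, respectively. For compatibility, given any $f : G \to G'$, the map $f \sqcup \mathrm{id}_H : G + H \to G' + H$ preserves edges within each summand because $f$ does and preserves the join edges between summands trivially; similarly $f \times \mathrm{id}_H : G \ast H \to G' \ast H$ preserves disjunctive-product adjacency because $f$ preserves the first disjunct and the identity preserves the second. I do not expect any substantial obstacle: the argument is purely a matter of unpacking definitions, and the only point demanding mild attentiveness is the distributivity check, where the crucial observation is that the join in $H + K$ creates all the edges between its two summands, so that the join edges on the right-hand side of $G \ast (H + K) \cong (G\ast H) + (G\ast K)$ are exactly accounted for on the left.
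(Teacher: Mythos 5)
Your proposal is correct and takes the same route the paper does: the paper's proof is simply ``straightforward to check,'' singling out the canonical distributivity isomorphism as the one point worth mentioning, and your verification fills in exactly those routine checks (identities, commutativity, associativity, annihilation, distributivity via the join edges being absorbed by the disjunction in the adjacency clause, and compatibility of the homomorphism preorder with both operations). Nothing is missing.
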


\begin{proof}
Straightforward to check; e.g.~the distributivity of multiplication over addition is given by the canonical isomorphism
\[
	G_1 \ast H + G_2 \ast H \stackrel{\cong}{\longrightarrow} (G_1 + G_2)\ast H,
\]
which is given by the identity on the level of vertices for all graphs $G_1$, $G_2$ and $H$.
\end{proof}

It is noteworthy that our version of $\Graph$ is technically a proper class rather than a set, since it contains graphs of arbitrarily large cardinality\footnote{This is because if we only consider graphs of cardinality at most $\kappa$ for some infinite cardinal $\kappa$, then there are at most $2^\kappa\times 2^{\kappa \times \kappa} = 2^\kappa$ isomorphism classes of graphs. Here, the first factor $2^\kappa$ arises from choosing a set of vertices, and the second factor $2^{\kappa\times \kappa}$ from choosing a set of edges.}. This type of situation is closely familiar from category theory, and here we also adopt the standard workaround using Grothendieck universes~\cite[Section~3.3]{Schu}, which lets us ignore this issue from now on.

We also write $\FGraph \subseteq \Graph$ for the preordered subsemiring of finite graphs, with respect to the restricted preorder relation and algebraic operations.

The objects of study of this paper are the following:

\begin{defn}
A \emph{semiring-homomorphic graph invariant} is an order-preserving semiring homomorphism $\eta : \FGraph \to \R_+$.
\end{defn}

Concretely, a \emph{semiring-homomorphic graph invariant} $\eta$ therefore assigns a number $\eta(G)$ to every graph $G\in\FGraph$ such that $\eta(K_1) = 1$, and such that the following hold for any two finite graphs $G,H\in\FGraph$:
\begin{itemize}
\item If $G\to H$, then $\eta(G) \leq \eta(H)$;
\item $\eta(G + H) = \eta(G) + \eta(H)$;
\item $\eta(G\ast H) = \eta(G)\, \eta(H)$.
\end{itemize}
The set of semiring-homomorphic graph invariants corresponds to the space of points defined over $\R_+$ of the \emph{real spectrum}\footnote{Strassen~\cite{strassen} has coined the term \emph{asymptotic spectrum} for this kind of set in light of condition~\eqref{large_power}. Due to the equivalence with the other conditions in \cref{spss_better}, which are arguably less of an asymptotic nature, and also due to the close relation with the real spectrum as used in real algebraic geometry~\cite[Section~2.4]{marshall}, we no longer find Strassen's terminology to be optimally descriptive.} of $\FGraph$. We study the known examples of semiring-homomorphic graph invariants as \cref{frac_chrom,symm_ex,symp_ex,lovasz}.

The purpose of this paper is to contribute to the problem of \emph{classifying} all semiring-homomorphic graph invariants, a project which has been initiated by Zuiddam~\cite{zuiddam}. It is a curious fact that all known semiring-homomorphic graph invariants are of a fractional nature; the fact that fractional graph theory plays an important role in this paper is merely due to this heuristic observation, and we do not know whether fractional graph theory is intrinsically important for the desired classification. In particular, it is open whether all semiring-homomorphic graph invariants are also monotone under fractional graph homomorphisms (\cref{graph_vs_fgraph}).

The rest of this section is mainly motivational, and dedicated to illustrating the relevance of trying to achieve a classification of semiring-homomorphic graph invariants. We begin by recalling a variant of Strassen's Positivstellensatz recently obtained by us in~\cite[Corollary~2.14]{our_SPSS}. This theorem adds two further equivalent characterization to Zuiddam's improved version~\cite[Theorem~2.2]{zuiddam} over Strassen's original result.

\begin{thm}[\cite{our_SPSS}]
\label{spss_better}
Let $S$ be a preordered commutative semiring such that $\N\subseteq S$ via the unique semiring homomorphism $\N\to S$, and such that for every nonzero $x\in S$ there is $n\in\N$ with $x \leq n$ and $1 \leq nx$. Then for nonzero $x,y\in S$, the following are equivalent:
\begin{enumerate}
	\item\label{a} $f(x) \geq f(y)$ for every monotone semiring homomorphism $f : S \to \R_+$.
\item\label{closure_order_new} For every $\eps > 0$, there are $m,n\in\Nplus$ and nonzero $z\in S$ such that $m \leq \eps n$ and
\[
	n\, z\, x + m\, z \geq n\, z\, y.
\]
\item\label{cat_order_new} For every $\eps > 0$, there are $m,n\in\Nplus$ and nonzero $z\in S$ such that $\frac{m}{n} \leq 1 + \eps$ and
\[
	m\, z\, x \geq n\, z\, y.	
\]
\item\label{d} For every $\eps > 0$ there are $k,n\in\Nplus$ such that $k\leq \eps n$ and
\beq
\label{large_power}
	2^k \, x^n \geq y^n.
\eeq
\end{enumerate}
Moreover, if $f(x) > f(y)$ for every monotone semiring homomorphism $f : S \to \R_+$, then there are $n\in\N$ and nonzero $z,w\in S$ with $zx + w \geq zy + w$.
\end{thm}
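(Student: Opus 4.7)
The plan is to combine compactness of the spectrum with condition~(c) above in order to convert the uniform strict inequality $f(x) > f(y)$ into an actual algebraic inequality in $S$.

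First I would establish compactness of the spectrum $\Delta$ of monotone semiring homomorphisms $S \to \R_+$, equipped with the topology of pointwise convergence. The archimedean-like hypothesis forces $f(s) \in [1/n_s, n_s]$ for every $f \in \Delta$ and every nonzero $s \in S$, so $\Delta$ embeds as a closed subset of the compact product $\prod_s [0, n_s]$ (the defining equations---additivity, multiplicativity, monotonicity, and $f(1) = 1$---are all closed conditions in the product topology) and is therefore itself compact. Consequently the continuous function $f \mapsto f(x)/f(y)$, which is well-defined since $f(y) \geq 1/n_y > 0$ and strictly greater than $1$ by hypothesis, attains a minimum $1 + \eps$ with $\eps > 0$, giving $f(x) \geq (1+\eps) f(y)$ uniformly in $f \in \Delta$.

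Next I would pick positive integers $p > q$ with $p/q < 1+\eps$ (e.g.\ $q$ large and $p = q+1$) and observe that $f(qx) = q f(x) \geq q(1+\eps) f(y) > p f(y) = f(py)$ for every $f \in \Delta$. In particular $f(qx) \geq f(py)$ for all $f$, so condition~(c) applied to the pair $(qx, py)$ with parameter $\delta := \tfrac{1}{2}(p/q - 1) > 0$ produces $m, n \in \Nplus$ and nonzero $z' \in S$ satisfying $m/n \leq 1 + \delta < p/q$ and $m z' \cdot qx \geq n z' \cdot py$ in $S$.

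The final step is elementary integer arithmetic: the bound $m/n < p/q$ forces $mq < np$ as positive integers, so $(np - mq)\,z'\,y$ is nonnegative in $S$, whence $n z' \cdot py = (np)\,z'\,y \geq (mq)\,z'\,y = m z' \cdot qy$. Chaining with the previous inequality yields $m z' \cdot qx \geq m z' \cdot qy$, i.e.\ $zx \geq zy$ for $z := mq\,z'$, which is nonzero; adding any nonzero $w \in S$ to both sides gives the claim. The main obstacle I anticipate is setting up the compactness argument of the first step rigorously, and in particular checking that the defining conditions of $\Delta$ are closed in the product topology on $\R_+^S$; once this is done, the remainder is a straightforward application of~(c) together with positivity in $S$. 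Note that the argument in fact yields the stronger conclusion $zx \geq zy$ without any buffer, so the $w$ in the statement appears to be included for generality rather than necessity.
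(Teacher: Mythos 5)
The first thing to note is that the paper does not prove this theorem at all: it is imported verbatim from \cite{our_SPSS} (Corollary~2.14 there), so there is no internal proof to compare against, and your proposal has to be judged as a standalone argument. As such it has a fundamental gap: you only address the final ``Moreover'' clause, and you do so by \emph{invoking} the implication (a)$\Rightarrow$(c) for the pair $(qx,py)$. That implication is the substantive Positivstellensatz content of the theorem --- the whole point of the statement is that the spectral condition (a) can be converted into the algebraic certificates (b), (c), (d) --- and nothing in your argument establishes it. Compactness of the spectrum $\Delta$ does not give it to you; the known proofs go through Strassen's machinery (extension to maximal ``total'' preorders and classification of the monotone homomorphisms on those, or the abstract Vergleichsstellens\"atze developed in \cite{our_SPSS}). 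Nor do you prove any of the other implications among (a)--(d), even the easy ones (c)$\Rightarrow$(a) or (d)$\Rightarrow$(a) obtained by applying $f$ and letting $\eps\to 0$. So the proposal is at best a derivation of the last sentence of the theorem \emph{conditional on} the main equivalences.

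That said, the part you do carry out is essentially sound, and it is in fact how the strict version is obtained from the non-strict one in this circle of ideas. The compactness argument is fine: the hypotheses confine $f(s)$ to $[1/n_s,n_s]$ for nonzero $s$, the defining conditions of $\Delta$ are closed in the product topology, and $f\mapsto f(x)/f(y)$ is continuous and $>1$, so its minimum $1+\eps$ exceeds $1$. The concluding arithmetic also works, but only because the hypothesis that $\N\subseteq S$ is an \emph{order embedding} supplies $0\leq 1$, hence $0\leq s$ for all $s$ (multiply by $s$), which is what justifies $(np)\,z'y\geq (mq)\,z'y$; you should say this, since a generic preordered semiring need not satisfy $0\leq s$. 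Two further small points to tidy up: you must check that $qx$ and $py$ are nonzero before applying (c) to them (they are: $qx=0$ together with $1\leq nx$ would force $q\leq nqx=0$, contradicting the order embedding of $\N$), and similarly that $z:=mq\,z'$ is nonzero. Your closing observation that the argument yields $zx\geq zy$ without the buffer $w$ is consistent with the statement --- the $w$ costs nothing since one may add it to both sides --- but none of this repairs the main omission: the equivalence of (a)--(d) remains entirely unproven.
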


In the case of $\FGraph$, the canonical homomorphism $\N\to\FGraph$ takes $n\in\N$ to the complete graph $K_n$, so that the ``scalars'' in $\FGraph$ are precisely the complete graphs. The product $n\, G$ is equal to the $n$-fold join of $G$ with itself. The boundedness conditions $x\leq n$ and $1 \leq nx$ therefore mean that for every $G\in\FGraph$ with at least one vertex, there is $n\in\N$ such that $G\to K_n$ and $K_1 \to n\, G$. While the latter condition simply states that $n\, G$ has at least one vertex, the former corresponds to having finite chromatic number, which is trivial for a finite graph. Therefore $\FGraph$ is a preordered semiring satisfying the hypotheses of \cref{spss_better}.

We now focus on the equivalence of conditions~\ref{a} and~\ref{d} in the case of $\FGraph$; the implications of the other statements of \cref{spss_better} for $\FGraph$ have not yet been explored.


\begin{thm}[Zuiddam]
\label{Theta_inf2}
The Shannon capacity satisfies
\beq
\label{shannon_capacity2}
	\Theta(\bar{G}) = \inf_{\eta} \eta(G),
\eeq
where $\eta$ ranges over all semiring-homomorphic graph invariants.
\end{thm}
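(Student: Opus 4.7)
The plan is to apply the equivalence of conditions~\ref{a} and~\ref{d} of \cref{spss_better} to the preordered commutative semiring $\FGraph$, whose hypotheses were verified in the paragraph preceding \cref{Theta_inf2}. The easy direction $\Theta(\bar G) \leq \inf_\eta \eta(G)$ is immediate: any semiring-homomorphic $\eta$ satisfies $\eta(K_k) = k$ by additivity under join, so any homomorphism $K_k \to G^{\ast n}$ forces $k \leq \eta(G)^n$, which gives $\omega(G^{\ast n})^{1/n} \leq \eta(G)$ for every $n$ and hence $\Theta(\bar G) \leq \eta(G)$.

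For the reverse inequality I would prove
\[
    (\forall\,\eta)\ \eta(G)^s \geq r \quad\Longleftrightarrow\quad \Theta(\bar G)^s \geq r
\]
for every $r \in \N$ and $s \in \Nplus$; the density of $\{r^{1/s} : r \in \N,\, s \in \Nplus\}$ in $\Rnonneg$ then forces $\inf_\eta \eta(G) = \Theta(\bar G)$. Apply \cref{spss_better} with $x = G^{\ast s}$ and $y = K_r$. Condition~\ref{a} is exactly the left-hand side above, and, using $K_r^{\ast n} = K_{r^n}$ together with the additivity of $\omega$ under graph joins, condition~\ref{d} unfolds to: for every $\eps > 0$ there exist $k, n \in \Nplus$ with $k \leq \eps n$ and $r^n \leq 2^k\,\omega(G^{\ast sn})$. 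This in turn matches $\Theta(\bar G)^s \geq r$: the supermultiplicativity of $\omega$ under $\ast$ combined with Fekete's lemma yields $\omega(G^{\ast N})^{1/N} \to \Theta(\bar G)$, so taking $n$-th roots gives $r \leq 2^{k/n}\,\big(\omega(G^{\ast sn})^{1/(sn)}\big)^s \leq 2^\eps\,\Theta(\bar G)^s$ and hence $r \leq \Theta(\bar G)^s$ by letting $\eps \to 0$; conversely, $\Theta(\bar G)^s \geq r$ and Fekete give $\omega(G^{\ast sn})^{1/(sn)} \geq r^{1/s}\cdot 2^{-\eps/(2s)}$ for all $n$ sufficiently large, and then $k = \lceil\eps n/2\rceil$ witnesses condition~\ref{d}.

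I do not anticipate any substantial obstacle: all the deep content is packaged inside \cref{spss_better}, so what remains is just the dictionary between its abstract condition~\ref{d} and the combinatorial quantity $\omega(G^{\ast N})$, together with the density argument needed to pass from integer values of $r$ to arbitrary real thresholds. The combinatorial inputs, namely additivity of $\omega$ under join, supermultiplicativity under $\ast$, and $K_r \ast K_s = K_{rs}$, are all standard.
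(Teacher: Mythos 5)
Your proposal is correct, and every step checks out: the translation of condition~\ref{d} of \cref{spss_better} for $x = G^{\ast s}$, $y = K_r$ into the inequality $r^n \leq 2^k\,\omega(G^{\ast sn})$ uses only $K_r^{\ast n} \cong K_{r^n}$, the identity $K_{2^k} \ast H = 2^k\, H$ (the $2^k$-fold join), and additivity of $\omega$ under joins; the passage back and forth to $\Theta(\bar G)^s \geq r$ via Fekete's lemma and the density of $\{r^{1/s}\}$ in $[1,\infty)$ is sound, and the degenerate cases (no vertex, or no edge so that $G \simeq K_1$) cause no trouble since then $\Theta(\bar G) = \eta(G)$ trivially.

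Your route differs from the one actually written in the paper. There, \cref{Theta_inf2} is deduced as a corollary of the rate formula (\cref{rate_formula}) by specializing to $H = K_2$ and using $R(G \to K_2) = \log_2\Theta(\bar G)$ together with $\eta(K_2) = 2$; the rate formula itself is proved by a citation to the regularized-rate corollary of the Positivstellensatz in the companion paper. So the paper's proof is essentially an appeal to external machinery, whereas yours is a direct, self-contained verification of the equivalence \ref{a}$\Leftrightarrow$\ref{d} of \cref{spss_better} for the specific pair $(G^{\ast s}, K_r)$ --- which is in fact the derivation the paper gestures at when it announces that it will ``focus on the equivalence of conditions \ref{a} and \ref{d} in the case of $\FGraph$,'' and is close to Zuiddam's original argument. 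What your approach buys is transparency about exactly which combinatorial facts are needed (additivity and supermultiplicativity of $\omega$, $K_r \ast K_s \cong K_{rs}$) and independence from the rate-formula formalism; what the paper's approach buys is that it simultaneously yields the more general \cref{rate_formula} for arbitrary targets $H$, of which \cref{Theta_inf2} is just the case $H = K_2$. One cosmetic remark: for the implication from condition~\ref{d} to $r \leq \Theta(\bar G)^s$ you only need $\omega(G^{\ast N})^{1/N} \leq \Theta(\bar G)$, which is immediate from the definition as a supremum; Fekete's lemma is genuinely needed only for the converse witness construction.
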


Before the proof, we note that it is also of interest to compute \emph{rates} for finite graphs $G,H\in\FGraph$ with at least one vertex,
\[
	R(G\to H) := \sup\ \left\{\ \frac{m}{n} \:\bigg|\: H^{\ast m} \to G^{\ast n} \ \right\},
\]
as originally defined in~\cite[Examples~8.1 and 8.18]{fritz}, and subsequently rediscovered as the \emph{graph information ratio} in~\cite{shaye_wang}, where an operational interpretation in terms of zero-error information theory has been given. As a special case, we have $R(G \to K_2) = \log_2 \Theta(\bar{G})$~\cite[Example~8.1]{fritz}. These rates can also be computed in terms of an optimization over semiring-homomorphic graph invariants:

\begin{thm}
\label{rate_formula}
If $G$ has at least one edge and $H$ at least one vertex, then
\[
	R(G \to H) = \inf_\eta \, \frac{\log \eta(G)}{\log \eta(H)},
\]
where $\eta$ ranges over all semiring-homomorphic graph invariants.
\end{thm}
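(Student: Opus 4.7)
The plan is to apply \cref{spss_better} to the preordered commutative semiring $\FGraph$, taking $x := G^{\ast q}$ and $y := H^{\ast p}$ for positive integers $p, q$ representing a candidate rate $p/q$. The two inequalities between $R(G\to H)$ and $\inf_\eta \log\eta(G)/\log\eta(H)$ will correspond respectively to the trivial monotonicity/multiplicativity of $\eta$ and to condition~\ref{d} of that theorem, together with a small trick to absorb scalar factors into powers of $G$.

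For the upper bound, if $H^{\ast m} \to G^{\ast n}$, then monotonicity and multiplicativity give $\eta(H)^m \leq \eta(G)^n$ for any semiring-homomorphic $\eta$. Since $K_1 \to H$, we have $\eta(H) \geq 1$, and when $\eta(H) > 1$ taking logarithms yields $m/n \leq \log\eta(G)/\log\eta(H)$; when $\eta(H) = 1$ the ratio is effectively $+\infty$ and imposes no constraint. Supping over admissible pairs $(m,n)$ and then infing over $\eta$ gives $R(G\to H) \leq \inf_\eta \log\eta(G)/\log\eta(H)$.

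For the lower bound, I would fix $p, q \in \Nplus$ with $p/q$ strictly less than the infimum and aim to show $R(G\to H) \geq p/q$. The hypothesis is equivalent to $\eta(G^{\ast q}) \geq \eta(H^{\ast p})$ for every semiring-homomorphic $\eta$: when $\eta(H) > 1$ this is a direct rearrangement, and when $\eta(H) = 1$ it follows from $\eta(G) \geq \eta(K_2) = 2$, which is where the hypothesis that $G$ has at least one edge enters. Condition~\ref{d} of \cref{spss_better} then supplies, for every $\eps > 0$, integers $k, n \in \Nplus$ with $k \leq \eps n$ such that $H^{\ast pn} \to K_{2^k} \ast G^{\ast qn}$. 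Since $K_2 \to G$, we have $K_{2^k} = K_2^{\ast k} \to G^{\ast k}$, so this upgrades to $H^{\ast pn} \to G^{\ast(qn+k)}$, yielding $R(G\to H) \geq pn/(qn+k) \geq p/(q+\eps)$. Letting $\eps \to 0$ and then $p/q$ approach the infimum concludes the argument.

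The only real obstacle is the bookkeeping around the case $\eta(H) = 1$, where the ratio $\log\eta(G)/\log\eta(H)$ must be read as $+\infty$; the assumption that $G$ has at least one edge is precisely what keeps the scalar $K_{2^k}$ from contributing a nonvanishing loss in rate after being absorbed into a small power of $G$. Everything else is a mechanical application of \cref{spss_better}, with $G^{\ast q}$ and $H^{\ast p}$ in the roles of the two semiring elements being compared.
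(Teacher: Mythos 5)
Your argument is correct. It differs from the paper's proof in presentation rather than in substance: the paper disposes of \cref{rate_formula} in one line by citing a general corollary on regularized rates in preordered semirings, whereas you carry out the derivation explicitly from \cref{spss_better} as stated in the text, applying the equivalence of conditions \ref{a} and \ref{d} to $x = G^{\ast q}$, $y = H^{\ast p}$. The two points where care is needed are both handled properly: the case $\eta(H) = 1$ (where the ratio must be read as $+\infty$, and where $\eta(G) \geq \eta(K_2) = 2$ from the edge hypothesis guarantees $\eta(G^{\ast q}) \geq \eta(H^{\ast p})$ anyway), and the absorption of the scalar $K_{2^k} = K_2^{\ast k} \to G^{\ast k}$ into a sublinear power of $G$, which is exactly where the hypothesis that $G$ has an edge does its work and why the rate loss $\tfrac{pn}{qn+k} \geq \tfrac{p}{q+\eps}$ vanishes as $\eps \to 0$. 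What your version buys is a self-contained proof within the paper, at the cost of redoing in this special case what the cited corollary establishes in general; the paper's citation also silently covers the identification of the regularized rate with the $\sup\{m/n \mid H^{\ast m} \to G^{\ast n}\}$ definition used here, which in your write-up is simply the definition you work with directly.
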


This is in the spirit of the \emph{rate formulas} for resource efficiency in the sense of~\cite[Theorem~8.24]{fritz}. \cref{Theta_inf2} follows from this\footnote{The degenerate cases where $G$ has no edge or even no vertex must be treated separately, but it is obvious that \cref{Theta_inf2} holds for them.} upon using $R(G\to K_2) = \log_2 \Theta(\bar{G})$ as noted above together with $\eta(K_2) = 2$ for all $\eta$. 

\begin{proof}
This holds for regularized rates as per~\cite[Corollary~2.17]{our_SPSS}, which coincide with the $R(G\to H)$ as per~\cite[Example~8.18]{fritz}.
\end{proof}

By \cref{Theta_inf2}, if a classification of semiring-homomorphic graph invariants was available in such a way that one could enumerate them algorithmically, this would resolve the long-standing open question of computability of $\Theta$ in the positive. So far, only few semiring-homomorphic graph invariants are known at all; we will discuss them in \cref{sec_recover}, explaining how they are instances of the general construction given in \cref{sec_srfams}, for which we prepare in the next section.

\begin{rem}
	Both \cref{Theta_inf2,rate_formula} are still correct, with the same proofs remaining valid, if we include in the definition of $\FGraph$ not only the finite graphs, but all graphs with finite chromatic number.
\label{finite_chi}
\end{rem}

\section{The algebraic structure of blowup and fractionalization}
\label{sec_frac}

The following considerations make explicit the algebraic structure of fractionalization, in the general case of possibly infinite graphs. They apply to the finite case in particular in the sense that all the following constructions preserve finiteness. In this context, some of the following observations also appear in the proofs in Zuiddam's thesis~\cite[Section~3.3.2]{zuiddam_thesis}, in particular~\eqref{blow_distribute} and~\cref{unidistributive}.

\begin{defn}
\label{def_blowup}
For a graph $G$ and $d\in\Nplus$, the \emph{$d$-fold blowup} is given by the lexicographic product $G\ltimes K_d$, and we denote it by $G\ltimes d$.
\end{defn}
	
In other words, $G\ltimes d$ arises from $G$ by replacing each vertex $v\in V(G)$ by $d$ adjacent copies $v_1,\ldots,v_d\in V(G\ltimes d)$. It is easy to see that the map $G\mapsto G\ltimes d$ is functorial in the sense that it takes graph homomorphisms to graph homomorphisms, and therefore also preserves homomorphic equivalence.

It is important to distinguish $G\ltimes d$ from the $d$-fold scalar multiple of $G$ in $\Graph$, which is $d\,G = K_d\ast G$. The blowup approximates $d\, G$ in that $G\ltimes d \to K_d \ast G$, while the converse does generally not hold:

\begin{ex}
\label{blowup_not_scalar}
Let $C_5$ be the $5$-cycle. Then $2\,C_5 \not\to C_5\ltimes 2$ since $\chi(2\,C_5) = \chi(K_2\ast C_5) = 6$, while $\chi(C_5\ltimes 2) = 5$. (This is essentially the well-known observation that the fractional chromatic number of $C_5$ is strictly smaller than its chromatic number, since $\chi$ always commutes with scalar multiplication, $\chi(d\,G) = d\,\chi(G)$. This is easy to see directly or from \cref{eta_additive} and \cref{frac_chrom}.)
\end{ex}

\begin{defn}
Given a graph $G$, its \emph{power graph} $2^G$ has as vertices the subsets of $V(G)$, with adjacency $S\sim T$ for two subsets $S,T\subseteq V(G)$ if and only if $S\cap T = \emptyset$ and $s\sim t$ for every $s\in S$ and $t\in T$.
\end{defn}

\begin{defn}
\label{def_frac}
For a graph $G$ and $d\in\N$, the \emph{$d$-fractionalization} $G/d$ is the induced subgraph of $2^G$ on the set of $d$-cliques in $G$.
\end{defn}

For example, the $d$-fractionalization of a complete graph is the corresponding Kneser graph, $K_n / d = KG_{n,d}$. Again the operation $G\mapsto G/d$ is obviously functorial in $G$ under graph homomorphisms, and therefore preserves homomorphic equivalence.

The notation $G/d$ is motivated by the following observation:

\begin{lem}
\label{adj}
The $d$-fold blowup and the $d$-fractionalization are Galois adjoints:
\[
	G\ltimes d \to H \quad \Longleftrightarrow \quad G \to H/d.
\]
\end{lem}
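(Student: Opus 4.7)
The plan is to prove both implications directly by constructing explicit homomorphisms, exploiting the fact that a homomorphism $G\ltimes d \to H$ is essentially a family of $d$-cliques in $H$ indexed by $V(G)$, compatibly with the adjacency of $G$.

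For the forward direction, suppose $f : G\ltimes d \to H$ is a graph homomorphism. Writing $v_1,\dots,v_d$ for the $d$ copies of $v\in V(G)$ in $G\ltimes d$, I observe that these copies form a clique in $G\ltimes d$ by the definition of the lexicographic product (since $v = v$ and $i\neq j$ forces $v_i\sim v_j$). Hence $f(v_1),\dots,f(v_d)$ must be pairwise adjacent in $H$, and in particular pairwise distinct, so $g(v) := \{f(v_1),\dots,f(v_d)\}$ is a $d$-clique in $H$, i.e.\ a vertex of $H/d$. If $v\sim w$ in $G$, then the lexicographic product definition gives $v_i\sim w_j$ for all $i,j$, so $f(v_i)\sim f(w_j)$ in $H$ for all $i,j$; this simultaneously forces $g(v)\cap g(w) = \emptyset$ and total adjacency between $g(v)$ and $g(w)$, which is exactly the adjacency relation in $H/d$. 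Hence $g : G \to H/d$.

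For the reverse direction, suppose $g : G \to H/d$ is a graph homomorphism, so that each $g(v)$ is a $d$-clique in $H$. For every $v\in V(G)$, choose an arbitrary enumeration $g(v) = \{x_1^v,\dots,x_d^v\}$, and define $f : G\ltimes d \to H$ by $f(v_i) := x_i^v$. I then check the two cases of the adjacency $v_i\sim w_j$ in $G\ltimes d$: if $v=w$ and $i\neq j$, then $f(v_i)$ and $f(w_j)$ are two distinct vertices of the clique $g(v)$, hence adjacent in $H$; if instead $v\sim w$ in $G$, then $g(v)\sim g(w)$ in $H/d$ yields $f(v_i) = x_i^v \sim x_j^w = f(w_j)$ in $H$.

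There is no substantial obstacle — the argument is essentially a direct unwinding of the two definitions — but the one point to handle with care is the informal identification used to pass between \emph{unordered} $d$-cliques (as in \cref{def_frac}) and \emph{ordered} $d$-cliques (as in the remark following it). In the forward direction this is harmless because the image $\{f(v_1),\dots,f(v_d)\}$ has exactly $d$ elements; in the reverse direction it is handled by the arbitrary choice of enumeration, which does not affect adjacency since the ordering of an unordered clique plays no role in the definition of $H/d$.
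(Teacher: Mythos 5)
Your proof is correct and is exactly the detailed version of what the paper dismisses in one line as ``straightforward\dots essentially by currying'': a homomorphism $G\ltimes d\to H$ is the same data as a $G$-indexed, $G$-compatible family of $d$-cliques in $H$. Your care about the ordered/unordered clique issue and about why adjacency forces disjointness of $g(v)$ and $g(w)$ (no loops in a simple graph) is exactly the right bookkeeping.
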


\begin{proof}
	It is straightforward to see that either type of homomorphism can be reinterpreted as the other.
\end{proof}

The adjunction implies that $G \to (G\ltimes d) / d$ and $(H/d) \ltimes d \to H$, although the converse relations generally do not hold:

\begin{ex}
Starting with $K_3$, we have $(K_3\ltimes 2) / 2 \cong K_6 / 2 \cong KG_{6,2}$, and $\chi(KG_{6,2}) = 4$ by the Lov\'asz--Kneser theorem~\cite[Chapter~38]{thebook}. Therefore $(K_3\ltimes 2)/2 \not\to K_3$. For the other statement, since $C_5 / 2$ is an edgeless graph, it is trivial that $C_5 \not\to (C_5 / 2)\ltimes 2$.
\label{monads}
\end{ex}

\begin{lem}
\label{Naction}
For all $G$ and $d,d'\in\Nplus$,
\[
	(G\ltimes d)\ltimes d' \cong G\ltimes dd', \qquad G/d'/d \simeq G/(dd').
\]
\end{lem}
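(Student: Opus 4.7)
The two halves have different flavours, so I would handle them separately. Part (a) should be a genuine isomorphism, while part (b) is only a homomorphic equivalence (and indeed the two graphs have different numbers of vertices in general, since a vertex of $G/d'/d$ encodes both a $dd'$-clique in $G$ and a partition of it into $d$ blocks of size $d'$).

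For (a), I would give a direct isomorphism. A vertex of $(G \ltimes d) \ltimes d'$ is a triple $((v,i),j)$ with $v \in V(G)$, $i \in \{1,\ldots,d\}$, $j \in \{1,\ldots,d'\}$, and a vertex of $G \ltimes dd'$ is a pair $(v,k)$ with $k \in \{1,\ldots,dd'\}$. The obvious bijection $((v,i),j) \mapsto (v,\, d'(i-1) + j)$ is my candidate. The adjacency check unfolds the definition of $\ltimes$ twice on the left side: $((v,i),j) \sim ((v',i'),j')$ iff $v \sim_G v'$, or $v = v'$ and $i \neq i'$, or $v = v'$ and $i = i'$ and $j \neq j'$; on the right side $(v,k) \sim (v',k')$ iff $v \sim_G v'$ or ($v = v'$ and $k \neq k'$). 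These match under the bijection, so the isomorphism is established. Equivalently and more conceptually, lexicographic product is associative and $K_d \ltimes K_{d'} \cong K_{dd'}$, so $(G \ltimes K_d) \ltimes K_{d'} \cong G \ltimes (K_d \ltimes K_{d'}) \cong G \ltimes K_{dd'}$.

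For (b), rather than attempt any explicit vertex bookkeeping, I would invoke the Galois adjunction of \cref{adj} together with part (a). For any test graph $A$, iterating \cref{adj} twice gives
\[
	A \to (G/d')/d \;\Longleftrightarrow\; A \ltimes d \to G/d' \;\Longleftrightarrow\; (A \ltimes d) \ltimes d' \to G,
\]
and on the other side \cref{adj} together with part (a) yields
\[
	A \to G/(dd') \;\Longleftrightarrow\; A \ltimes dd' \to G \;\Longleftrightarrow\; (A \ltimes d) \ltimes d' \to G.
\]
So the two conditions $A \to (G/d')/d$ and $A \to G/(dd')$ coincide for every $A$. Specialising $A$ to each side of the desired equivalence in turn produces graph homomorphisms in both directions, giving $G/d'/d \simeq G/(dd')$.

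The main obstacle, if any, is resisting the urge to build an explicit map for (b); the adjunction argument is just the preorder-level shadow of the fact that right adjoints of isomorphic functors are canonically isomorphic, and part (a) supplies exactly that isomorphism of left adjoints. The adjacency verification in (a) is the only part that requires any computation, and it is a short case split.
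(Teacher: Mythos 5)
Your proof is correct and matches the paper's argument: part (a) is checked directly (the paper leaves it as "easy to check"), and part (b) is exactly the paper's appeal to uniqueness of adjoints up to equivalence combined with composition of adjoints, which you have simply unfolded into the explicit test-object computation.
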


\begin{proof}
The first isomorphism is easy to check directly. The second homomorphic equivalence then follows from \cref{adj} by the uniqueness of adjoints up to equivalence, together with composition of adjoints: $(-\ltimes d)\ltimes d'\dashv (-)/d'/d$ and $(-\ltimes d)\ltimes d' \cong (-) \ltimes dd' \dashv (-)/(dd')$.
\end{proof}

Despite the homomorphic equivalence, the isomorphism $G/d'/d \cong G/(dd')$ does generally not hold, since there are generally many ways of partitioning a $(dd')$-clique into a $d$-clique of $d'$-cliques.

\begin{lem}
\label{plus_distribute}
For any graphs $G$ and $H$ and $d\in\Nplus$,
\begin{align}
	G \ltimes d + H \ltimes d & \cong (G + H) \ltimes d, \label{blow_distribute} \\
	G / d + H / d & \to (G + H) / d, \label{frac_distribute}
\end{align}
but in general $(G + H) / d \not\to G / d + H / d$.
\end{lem}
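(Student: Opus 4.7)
The statement has three parts, which I would handle in order of increasing content.

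For the isomorphism $G \ltimes d + H \ltimes d \cong (G+H) \ltimes d$, both sides have a vertex set canonically in bijection with $(V(G) \sqcup V(H)) \times \{1,\ldots,d\}$. Unfolding the definitions of join and lexicographic product, the edges match case by case: a pair with both first coordinates in the same summand inherits the adjacency rule of the corresponding blowup, while a pair with first coordinates in different summands is adjacent on both sides---on the left by definition of the join, and on the right because the first coordinates are already adjacent in $G+H$, which makes them adjacent in $(G+H)\ltimes d$ regardless of second coordinate.

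For the homomorphism $G/d + H/d \to (G+H)/d$, the natural map sends a $d$-clique $S \subseteq V(G)$ or $T \subseteq V(H)$ to itself, now regarded as a $d$-clique in $G+H$; this is well-defined since $G$ and $H$ embed as induced subgraphs of $G+H$. Edge-preservation then reduces to two verifications: adjacent cliques within either summand stay adjacent because the witnessing edges are inherited, and any $S \in V(G/d)$ and $T \in V(H/d)$ are automatically disjoint as subsets of $V(G+H)$ with every cross-pair adjacent in $G+H$ by definition of the join, hence adjacent in $(G+H)/d$.

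The negative part is where the content of the statement lies: the point is that $(G+H)/d$ contains \emph{mixed} $d$-cliques drawn partly from $V(G)$ and partly from $V(H)$, which are not accounted for by $G/d$ or $H/d$. To witness this cleanly I would take $G = H = K_{d-1}$ for any $d \geq 2$. Then neither summand contains a $d$-clique, so $G/d + H/d$ is the empty graph, while $G + H \cong K_{2(d-1)}$ does contain $d$-cliques (since $2(d-1) \geq d$ for $d \geq 2$); hence $(G+H)/d$ is nonempty and cannot map homomorphically to the empty graph. The only real obstacle is finding a sufficiently minimal counterexample; once the ``mixed clique'' heuristic is identified, this choice is essentially forced.
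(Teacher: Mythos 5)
Your proof is correct and follows essentially the same route as the paper: a direct case check for the isomorphism, the canonical inclusion of cliques for the homomorphism, and a ``mixed clique'' counterexample for the negative part. Your family $G = H = K_{d-1}$ specializes at $d=2$ to the paper's own (degenerate) example $G = H = K_1$, $K_1/2 \cong \emptyset$ versus $K_2/2 \cong K_1$, and has the mild added value of working uniformly for every $d \geq 2$.
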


\begin{proof}
Again the first isomorphism is straightforward to check\footnote{It is also an instance of the general categorical fact that left adjoints preserve colimits.}. The second homomorphism $G / d + H / d \to (G + H) / d$ is clear since the images of the two canonical homomorphisms $G / d \to (G + H) / d$ and $H / d \to (G + H) / d$ are pairwise adjacent.

In the other direction, taking $G = H = K_1$ and $d = 2$ gives a (rather degenerate) counterexample, since $K_1 / 2 \cong \emptyset$, while $K_2 / 2 \cong K_1$.
\end{proof}

Let us emphasize again that the operation $(-)\ltimes d$ does not coincide with scalar multiplication in $\Graph$. More specifically, although
\[
	G\ltimes (d + d') \to G\ltimes d + G\ltimes d'
\]
holds, the converse does generally not, as e.g.~the example $G = C_5$ and $d = d' = 1$ shows (\cref{blowup_not_scalar}).

Finally, there is a compatibility inequality between the blow-up and the disjunctive product:

\begin{lem}
\label{unidistributive}
For any graphs $G$ and $H$ and $d\in \N$,
\[
	(G\ast H)\ltimes d \to (G\ltimes d) \ast H.
\]
but the converse does not hold in general.
\end{lem}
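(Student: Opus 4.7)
My plan is to handle the homomorphism direction by writing down the obvious coordinate-reshuffling map and checking it case by case, and to obtain the counterexample by specializing to a case already recorded in the paper.

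For the homomorphism $(G\ast H)\ltimes d \to (G\ltimes d) \ast H$: I unfold both vertex sets. Both have underlying set $V(G)\times V(H)\times \{1,\ldots,d\}$ (up to a transposition of the last two coordinates), so the natural candidate is the bijection
\[
	\phi \colon \bigl((v,w),i\bigr) \longmapsto \bigl((v,i),w\bigr).
\]
From the definitions in \cref{def_blowup} and the disjunctive product, $\bigl((v,w),i\bigr) \sim \bigl((v',w'),j\bigr)$ in $(G\ast H)\ltimes d$ splits into three cases: (i) $v \sim v'$ in $G$, (ii) $w \sim w'$ in $H$, or (iii) $(v,w) = (v',w')$ and $i\neq j$. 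In each case, I verify that $\phi$ produces an adjacency in $(G\ltimes d)\ast H$: case (i) gives $(v,i)\sim (v',j)$ in $G\ltimes d$ (first clause of lexicographic adjacency), hence adjacency in the disjunctive product; case (ii) is immediate from $w\sim w'$; case (iii) gives $v = v'$ with $i\neq j$, which is the second clause of the lexicographic adjacency, again producing the required edge. Thus $\phi$ is a graph homomorphism.

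For the converse, the shortest counterexample is the degenerate one already hinted at by \cref{blowup_not_scalar}: take $G = K_1$, $H = C_5$, and $d = 2$. Then
\[
	(G\ast H)\ltimes d \;=\; C_5 \ltimes 2, \qquad (G\ltimes d)\ast H \;=\; K_2 \ast C_5 \;=\; 2\,C_5,
\]
since $d\,X = K_d \ast X$ in $\Graph$. By \cref{blowup_not_scalar} we have $\chi(2\,C_5) = 6 > 5 = \chi(C_5\ltimes 2)$, and chromatic number is monotone under graph homomorphisms, so $(G\ltimes d)\ast H \not\to (G\ast H)\ltimes d$.

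There is essentially no obstacle here: the homomorphism direction is a definition-chase, and the separation is imported wholesale from \cref{blowup_not_scalar}. The only thing to double-check is that my labelling of the vertex sets matches the conventions of the definitions of $G\ltimes H$ and $G\ast H$ (both use the cartesian product with specific adjacency rules), so that the case analysis for $\phi$ really covers every edge.
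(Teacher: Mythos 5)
Your proof is correct. The homomorphism direction is the same definition-chase as in the paper: the paper simply writes out the two adjacency predicates (three-clause disjunctions) and observes that the left-hand one implies the right-hand one, which is exactly your three-case check for the map $\bigl((v,w),i\bigr)\mapsto\bigl((v,i),w\bigr)$.

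Where you genuinely diverge is the counterexample, and your choice is arguably cleaner. The paper takes $G = K_2$, $H = C_5$, $d = 2$ and separates $(K_2\ltimes 2)\ast C_5$ from $(K_2\ast C_5)\ltimes 2$ by comparing $\chi\bigl((K_2\ltimes 2)\ast C_5\bigr) = \chi(4\,C_5) = 12$ against $\chi\bigl((K_2\ast C_5)\ltimes 2\bigr) = 10$, where the latter value is obtained by an explicit computation in \textsc{Sage}. Your degenerate choice $G = K_1$ reduces the two sides to $2\,C_5$ and $C_5\ltimes 2$, so the separation $\chi(2\,C_5) = 6 > 5 = \chi(C_5\ltimes 2)$ is exactly the content of \cref{blowup_not_scalar} and needs no machine computation. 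Both are valid; yours buys self-containedness at the cost of using a trivial $G$, while the paper's shows the failure persists for a nondegenerate $G$. One tiny point of hygiene: write $K_1\ast C_5\cong C_5$ and $K_1\ltimes 2\cong K_2$ as isomorphisms rather than equalities, since the underlying vertex sets are formally cartesian products; this does not affect the argument.
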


\begin{proof}
The adjacency is given by $(i,g,h) \sim (i',g',h')$ on the left-hand side if and only if
\[
	h \sim h' \lor g \sim g' \lor (g = g' \land h = h' \land i \neq i')
\]
and on the right-hand side if and only if
\[
	h \sim h' \lor g \sim g' \lor (g = g' \land i \neq i').
\]
It is clear that the first implies the second.

As a counterexample to the other direction, we have $(K_2\ltimes 2)\ast C_5 \not\to (K_2\ast C_5)\ltimes 2$. One way to see this is to note that $\chi( (K_2\ltimes 2) \ast C_5) = \chi(4\, C_5) = 4 \cdot 3$, while $\chi((K_2\ast C_5)\ltimes 2\ltimes 2) = 10$, where the latter is per explicit computation in \textsc{Sage}.
\end{proof}

We finish our investigations of the algebraic structure of fractionalization with a result on the compatibility of blow-up with fractionalization:

\begin{lem}
For a graph $G$ and $d,d'\in\Nplus$, we have $(G/d') \ltimes d \to (G\ltimes d) / d'$.
\label{blowup_vs_frac}
\end{lem}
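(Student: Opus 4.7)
The plan is to write down an explicit vertex map and verify it is a graph homomorphism by checking the two cases dictated by the lexicographic product on the domain. A vertex of $(G/d')\ltimes d$ is a pair $(C,i)$ consisting of a $d'$-clique $C\subseteq V(G)$ and an index $i\in V(K_d)$, while a vertex of $(G\ltimes d)/d'$ is a $d'$-clique of the blown-up graph $G\ltimes d$. The natural guess is the ``horizontal slice'' map
\[
	\Phi(C,i) \;:=\; \{\,(v,i) \mid v\in C\,\}.
\]
The first step is to check that $\Phi(C,i)$ is indeed a $d'$-clique in $G\ltimes d$: its elements are pairwise distinct because the $v$'s are, and for $v\neq v'\in C$ we have $v\sim v'$ in $G$, hence $(v,i)\sim(v',i)$ in the lexicographic product.

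Next I would verify the homomorphism property. Suppose $(C,i)\sim(C',i')$ in $(G/d')\ltimes d$; by definition of the lexicographic product this splits into two cases. In the case $C\sim C'$ in $G/d'$, the $d'$-cliques $C$ and $C'$ are disjoint and every $v\in C$ is adjacent to every $v'\in C'$ in $G$, which immediately yields both that $\Phi(C,i)\cap\Phi(C',i')=\emptyset$ and that every cross pair $(v,i)$, $(v',i')$ is adjacent in $G\ltimes d$; this is precisely the adjacency condition in the power graph, so $\Phi(C,i)\sim\Phi(C',i')$ in $(G\ltimes d)/d'$. In the case $C=C'$ and $i\neq i'$, disjointness of $\Phi(C,i)$ and $\Phi(C,i')$ follows since $i\neq i'$, and for any $v,v'\in C$ we have $(v,i)\sim(v',i')$ in $G\ltimes d$: when $v=v'$ because $i\neq i'$, and when $v\neq v'$ because $v\sim v'$ in the clique $C$. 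Either way the required adjacency in $(G\ltimes d)/d'$ holds.

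I do not anticipate a real obstacle here; the entire lemma is essentially a bookkeeping exercise once the correct map is written down. The only minor subtlety is remembering that adjacency in the power graph $2^{G\ltimes d}$ requires both disjointness and total cross-adjacency, which is why the two cases above must be checked in full rather than just the cross-adjacency part. The asymmetric nature of $(-)\ltimes d$ (it acts on one factor but demands equality on it) meshes perfectly with the fact that we place all of $C$ at the same index $i$, which is why $\Phi$ works and why a symmetric alternative such as distributing the elements of $C$ over distinct indices would fail to yield a $d'$-clique.
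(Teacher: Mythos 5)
Your proof is correct: the slice map $\Phi(C,i)=\{(v,i)\mid v\in C\}$ is well defined, lands in $d'$-cliques of $G\ltimes d$, and your two-case check of the lexicographic adjacency on the domain against the power-graph adjacency (disjointness plus total cross-adjacency) on the codomain is complete and accurate. The route is genuinely different from the paper's, though: the paper does not write down any map at all, but instead composes previously established structural facts --- the counit $(G/d')\ltimes d'\to G$ of the Galois adjunction of \cref{adj}, functoriality of blow-up, the associativity $(G\ltimes d)\ltimes d'\cong G\ltimes dd'$ of \cref{Naction}, and then the adjunction once more to transpose $(G/d')\ltimes d\ltimes d'\to G\ltimes d$ into the desired statement. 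If you unwind that abstract argument you recover essentially your map $\Phi$, so the two proofs agree at the level of the witnessing homomorphism. What the paper's approach buys is brevity and reuse of the calculus already set up in \cref{sec_frac}; what yours buys is self-containedness and an explicit description of the homomorphism, at the cost of redoing by hand the adjacency bookkeeping that \cref{adj,Naction} were designed to package once and for all. One small point worth making explicit in your write-up: since the graphs are simple, a homomorphism must send adjacent vertices to \emph{distinct} vertices; your disjointness checks in both cases already guarantee $\Phi(C,i)\neq\Phi(C',i')$ whenever $(C,i)\sim(C',i')$, so nothing is missing, but it is the disjointness clause (not just cross-adjacency) that is doing that work.
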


\begin{proof}
We already know $(G/d')\ltimes d' \to G$, which implies $(G/d') \ltimes dd' \to G\ltimes d$ by functoriality of blow-up and \cref{Naction}. Another application of \cref{Naction} and the adjunction of \cref{adj} gives indeed $(G/d')\ltimes d \to (G\ltimes d) / d'$.
\end{proof}

\section{Semiring families of graphs and the linear-like condition}
\label{sec_srfams}

The essential ingredients of our upcoming construction of semiring-homomorphic graph invariants will be semiring families of graphs:

\begin{defn}
\label{srfam}
A \emph{semiring family} of graphs is given by a sequence of graphs $(F_n)_{n\in\N}$ such that $F_0 = \emptyset$ and $F_1\neq\emptyset$ and for all $n,m\in\N$,
\beq
\label{srfam_homs}
	F_n + F_m \to F_{n + m}, \qquad F_n * F_m \to F_{nm}.
\eeq
\end{defn}

These two operations are analogues of the addition and multiplication on a single semiring; we could say that they are equivalent to specifying a \emph{lax semiring homomorphism} $\N \to \Graph$ by analogy with lax monoidal functors~\cite[Chapter~3]{AM}. The condition $F_1\neq\emptyset$ then corresponds to unitality in the form $K_1 \to F_1$. We intentionally do not consider the particular homomorphisms implementing~\eqref{srfam_homs} as part of the structure of a semiring family. In particular, we do not require the existence of homomorphisms which would satisfy compatibility equations like associativity or distributivity, since not all of these actually hold e.g.~in the upcoming \cref{lovasz}. Finally, one can also imagine indexing a semiring family by semirings other than $\N$, but we will not consider such more general families here.

Since $F_1\neq\emptyset$, we have $\omega(F_1) \geq 1$. We can then use induction and $F_n + F_1 \to F_{n+1}$ to show that $\omega(F_n) \geq n$ for all $n\in\N$. Next, we show that it's possible to take ``common denominators'':

\begin{lem}
\label{com_denom}
Let $(F_n)$ be a semiring family. Then for every $n,d,m\in\Nplus$,
\[
	F_n/d \to F_{mn}/md.
\]
\end{lem}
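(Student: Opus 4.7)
My plan is to reduce the statement to the single key homomorphism $F_n \ltimes m \to F_{mn}$, from which the conclusion follows formally from the adjunction machinery of the previous section.

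To establish $F_n \ltimes m \to F_{mn}$, I would chain three homomorphisms. Since $F_n \ltimes m = F_n \ltimes K_m$, the general inequality $G \ltimes H \to G \ast H$ noted in \cref{algebra} gives $F_n \ltimes m \to F_n \ast K_m$. Next, the observation immediately preceding the lemma that $\omega(F_m) \geq m$ provides a homomorphism $K_m \to F_m$, and functoriality of the disjunctive product then gives $F_n \ast K_m \to F_n \ast F_m$. Finally, the defining multiplicativity property of a semiring family yields $F_n \ast F_m \to F_{mn}$, and concatenating these three maps produces the desired homomorphism.

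From here everything is formal. Applying the adjunction of \cref{adj} to $F_n \ltimes m \to F_{mn}$ transposes it to $F_n \to F_{mn}/m$. Functoriality of the $d$-fractionalization operation under graph homomorphisms then gives $F_n/d \to (F_{mn}/m)/d$, and \cref{Naction} supplies the homomorphic equivalence $(F_{mn}/m)/d \simeq F_{mn}/(md)$. Composing yields $F_n/d \to F_{mn}/(md)$, as required.

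I do not expect any real obstacle: the proof is essentially a bookkeeping exercise combining the two defining properties of a semiring family with the adjunction of \cref{adj} and the associativity \cref{Naction}. The only mildly subtle point is recognizing that it is the blow-up by $m$, rather than scalar multiplication by $m$, which interacts properly with the multiplicative structure of the family — precisely because $G \ltimes K_m \to G \ast K_m$ while the converse fails in general (compare \cref{blowup_not_scalar}).
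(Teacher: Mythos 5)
Your proof is correct and follows essentially the same route as the paper: both reduce the statement to the single homomorphism $F_n\ltimes m \to F_{mn}$, obtained from the same chain $F_n\ltimes m \to F_n \ast K_m \to F_n \ast F_m \to F_{mn}$, and then transport it through the adjunction of \cref{adj}, functoriality of fractionalization, and \cref{Naction}. The only cosmetic difference is the order of the bookkeeping (the paper first reduces to $d=1$, you first establish the key map and then fractionalize), which changes nothing of substance.
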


\begin{proof}
By \cref{Naction} and functoriality of $-/d$, it is enough to consider the case $d = 1$. By the adjunction of \cref{adj}, we only need to prove $F_n\ltimes m \to F_{mn}$. This arises as the composite
\[
	F_n\ltimes m \to K_m \ast F_n \to F_m \ast F_n \to F_{mn}. \qedhere
\]
\end{proof}

We will prove in \cref{mainthm} that every semiring family which satisfies an additional \emph{linear-like} condition gives rise to a semiring-homomorphic graph invariant by fractionalization. In general, there are three basic invariants that can be constructed from a semiring family:

\begin{defn}
Let $F = (F_n)$ be a semiring family and $G\in\FGraph$.
\begin{enumerate}
\item The \emph{$F$-number} of $G$ is
\beq
\label{eta}
	\graphinv{F}(G) := \min \{\: n\in\N \mid G \to F_n \: \}.
\eeq
\item The \emph{fractional $F$-number} of $G$ is
\beq
\label{etaf}
	\graphinvfrac{F}(G) := \inf \left\{\: \frac{n}{d} \Bigm| G \to F_n/d \:\right\}.
\eeq
\item The \emph{asymptotic $F$-number} of $G$ is
\beq
\label{etainfty}
\graphinvasymp{F}(G) := \inf_n \sqrt[n]{\graphinv{F}(G^{\ast n})}
\eeq
\end{enumerate}
\end{defn}

Here, the relation of $\graphinv{F}$ and $\graphinvfrac{F}$ can also be considered an instance of general fractionalization of graph parameters~\cite[(3.2)]{zuiddam_thesis}.

If we think of a semiring family $(F_n)$ as a combinatorial generalization of finite-dimensional inner product spaces over a field, with orthogonality as adjacency, then a homomorphism $G\to F_n$ is a combinatorial generalization of an orthogonal representation (or vector colouring), so that $\graphinv{F}(G)$ has the flavour of a vector chromatic number. In the case $F_n = K_n$ (\cref{frac_chrom}), we recover usual colourings and the usual chromatic number $\chi(G)$.

All three invariants are finite since $G$ is finite, and therefore has finite chromatic number, which implies that there is $n\in\N$ with $G\to K_n\to F_n$. Also all three invariants are monotone under graph homomorphisms by construction. Moreover:

\begin{prop}
\label{subinvs}
All three invariants $\graphinv{F}^\ast \in\{\graphinv{F}, \graphinvfrac{F}, \graphinvasymp{F}\}$ are subadditive under joins,
\[
	\graphinv{F}^\ast(G + H) \leq \graphinv{F}^\ast(G) + \graphinv{F}^\ast(H)
\]
and submultiplicative under lexicographic and disjunctive products,
\[
	\graphinv{F}^\ast(G\ltimes H) \leq \graphinv{F}^\ast(G \ast H) \leq \graphinv{F}^\ast(G) \, \graphinv{F}^\ast(H).
\]
\end{prop}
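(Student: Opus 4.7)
The inequality $\graphinv{F}^\ast(G\ltimes H) \leq \graphinv{F}^\ast(G\ast H)$ is immediate for all three invariants from the canonical homomorphism $G\ltimes H\to G\ast H$ noted just after the definition of the products, combined with the transparent monotonicity of each $\graphinv{F}^\ast$ under graph homomorphisms. The remaining work is to establish subadditivity under $+$ and submultiplicativity under $\ast$ for each of the three invariants in turn.

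For the non-fractional invariant $\graphinv{F}$, both bounds are direct from the definition of a semiring family: if $G\to F_n$ and $H\to F_m$ realize the defining minima, then functoriality of $+$ composed with $F_n+F_m\to F_{n+m}$ yields $G+H\to F_{n+m}$, and the analogous argument using $F_n\ast F_m \to F_{nm}$ gives submultiplicativity.

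For the fractional invariant $\graphinvfrac{F}$, I would take near-optimal witnesses $G\to F_n/d$ and $H\to F_m/e$. For subadditivity I first pass to the common denominator $de$ via \cref{com_denom}, which supplies $F_n/d\to F_{en}/(de)$ and $F_m/e\to F_{dm}/(de)$, and then chain \cref{plus_distribute} with the semiring family property to obtain $F_{en}/(de)+F_{dm}/(de) \to (F_{en}+F_{dm})/(de) \to F_{en+dm}/(de)$. This yields $\graphinvfrac{F}(G+H)\leq (en+dm)/(de)=n/d+m/e$, and taking the infimum completes that case. For submultiplicativity I would instead use the adjunction of \cref{adj} to rephrase the hypotheses as $G\ltimes d\to F_n$ and $H\ltimes e\to F_m$, and aim to produce $(G\ast H)\ltimes (de)\to F_{nm}$. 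This I obtain from the chain
\[
(G\ast H)\ltimes (de) \cong ((G\ast H)\ltimes d)\ltimes e \to ((G\ltimes d)\ast H)\ltimes e \to (G\ltimes d)\ast (H\ltimes e) \to F_n\ast F_m \to F_{nm},
\]
which uses \cref{Naction}, two applications of \cref{unidistributive} (the second after commuting the factors of $\ast$), functoriality of $\ast$, and finally the semiring family property. Re-applying \cref{adj} translates this back to $G\ast H\to F_{nm}/(de)$, whence $\graphinvfrac{F}(G\ast H)\leq (n/d)(m/e)$, and the infimum gives the claim.

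For the asymptotic invariant $\graphinvasymp{F}$, submultiplicativity under $\ast$ follows quickly from $(G\ast H)^{\ast n}\cong G^{\ast n}\ast H^{\ast n}$ together with the submultiplicativity of $\graphinv{F}$ just proven, after taking $n$-th roots and the infimum over $n$. Subadditivity under $+$ is the main obstacle. My plan is to expand $(G+H)^{\ast n}$ via distributivity of $\ast$ over $+$ into $2^n$ terms of the form $G^{\ast k}\ast H^{\ast(n-k)}$ and apply the already-established sub-additivity and sub-multiplicativity of $\graphinv{F}$ to obtain the binomial bound
\[
\graphinv{F}\!\left((G+H)^{\ast n}\right) \leq \sum_{k=0}^n \binom{n}{k}\graphinv{F}(G^{\ast k})\,\graphinv{F}(H^{\ast(n-k)}).
\]
Given $\eps>0$, Fekete's lemma (applicable thanks to submultiplicativity of $\graphinv{F}$) supplies a $k_0$ such that $\graphinv{F}(G^{\ast k})^{1/k}$ and $\graphinv{F}(H^{\ast k})^{1/k}$ lie within $\eps$ of $\graphinvasymp{F}(G)$ and $\graphinvasymp{F}(H)$ for $k\geq k_0$, while the uniform bounds $\graphinv{F}(G^{\ast k})\leq \graphinv{F}(G)^k$ take care of the boundary. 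Splitting the sum at this boundary, the bulk is dominated by $(\graphinvasymp{F}(G)+\graphinvasymp{F}(H)+2\eps)^n$ while the $O(n^{k_0})$ boundary terms grow only polynomially in $n$. Taking $n$-th roots and letting $\eps\to 0$ delivers the required inequality.
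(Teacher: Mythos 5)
Your proposal is correct and follows essentially the same route as the paper: the same reduction via $G\ltimes H\to G\ast H$, the same common-denominator and blowup/fractionalization manipulations for $\graphinvfrac{F}$, and the same binomial-expansion-plus-Fekete splitting for $\graphinvasymp{F}$. The only (harmless) variation is in the subadditivity of $\graphinvfrac{F}$, where you stay on the fractionalization side using \eqref{frac_distribute} and functoriality, whereas the paper passes through the adjunction of \cref{adj} and the isomorphism \eqref{blow_distribute}.
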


\begin{proof}
The inequality $\graphinv{F}^\ast(G\ltimes H) \leq \graphinv{F}^\ast(G\ast H)$ is by monotonicity of $\graphinv{F}^\ast$ and $G\ltimes H \to G\ast H$, so that it only remains to prove subadditivity and submultiplicativity with respect to the disjunctive product.

This is clear for the $F$-number $\graphinv{F}$, as it follows directly from the definition of a semiring family.

We prove the subadditivity for the fractional $F$-number $\graphinvfrac{F}$, which follows upon choosing common denominators as follows. For $\eps > 0$, choose $n,d,n',d'\in\N$ with $d,d' > 0$ such that we have $\eps$-approximations to the defining infima of $\graphinvfrac{F}$, namely
\[
	\graphinvfrac{F}(G) \geq \frac{n}{d} - \eps, \qquad \graphinvfrac{F}(H) \geq \frac{n'}{d'} - \eps,
\]
arising from $G \to F_n/d$ and $H \to F_{n'}/d'$. By \cref{com_denom}, we can assume $d = d'$ without loss of generality. But then we have $G \ltimes d\to F_n$ and $H\ltimes d \to F_{n'}$ by the Galois adjunction of \cref{adj}. Then $G\ltimes d + H\ltimes d \to F_n + F_{n'} \to F_{n + n'}$, and hence $(G + H)\ltimes d \to F_{n + n'}$ by the distributivity of \cref{plus_distribute}. Now $G + H \to F_{n+n'}/d$ again by the adjunction, which gives $\graphinvfrac{F}(G + H) \le \frac{n + n'}{d} \leq \graphinvfrac{F}(G) + \graphinvfrac{F}(H) + 2\eps$. The claim follows in the limit $\eps \to 0$.

The submultiplicativity of $\graphinvfrac{F}$ works similarly to the subadditivity, using the same data. We have $G \ltimes d\to F_n$ and $H\ltimes d' \to F_{n'}$, and therefore
\[
	(G \ast H)\ltimes dd' \to (G\ltimes d) \ast (H\ltimes d') \to F_n \ast F_{n'} \to F_{nn'},
\]
where the first homomorphism is by \cref{Naction,unidistributive}. Therefore
\[
	\graphinvfrac{F}(G \ast H) \leq \frac{nn'}{dd'} \leq \left( \graphinvfrac{F}(G) + \eps \right) \left( \graphinvfrac{F}(H) + \eps \right),
\]
so that submultiplicativity follows in the limit $\eps \to 0$.

For $\graphinvasymp{F}$, we first note that the infimum $\inf_n \sqrt[n]{\graphinv{F}(G^{\ast n})}$ is a limit by Fekete's lemma and submultiplicativity of $\graphinv{F}$. Using this, submultiplicativity of $\graphinvasymp{F}$ is then clear again by submultiplicativity of $\graphinv{F}$,
\[
	\sqrt[n]{\graphinv{F}( (G\ast H)^{\ast n} )} = \sqrt[n]{\graphinv{F}(G^{\ast n} \ast H^{\ast n})} \leq \sqrt[n]{\graphinv{F}(G^{\ast n})} \sqrt[n]{\graphinv{F}(H^{\ast n})},
\]
since then the inequality also holds in the limit. Finally we treat subadditivity of $\graphinvasymp{F}$. For given $\eps > 0$, we fix $k\in\N$ such that for all $m\geq k$,
\[
	\graphinvasymp{F}(G) \geq \sqrt[m]{\graphinv{F}(G^{\ast m})} - \eps, \qquad 
	\graphinvasymp{F}(H) \geq \sqrt[m]{\graphinv{F}(H^{\ast m})} - \eps.
\]
Now we have that $G^{\ast j} \to G^{\ast k}$ for $j \leq k$, and therefore for sufficiently large $n$,
\begin{align*}
	\sqrt[n]{\graphinv{F}( (G + H)^{\ast n} )} & = \sqrt[n]{\graphinv{F}\left(\sum_{j=0}^n \binom{n}{j} G^{\ast j} \ast H^{\ast (n-j)} \right)} \\
	& \leq \sqrt[n]{\sum_{j=0}^n \binom{n}{j} \graphinv{F}(G^{\ast j}) \graphinv{F}(H^{\ast (n-j)}) } \\
	& \leq \sqrt[n]{k \binom{n}{k} (\graphinv{F}(G^{\ast k}) \graphinv{F}(H^{\ast n}) + \graphinv{F}(G^{\ast n}) \graphinv{F}(H^{\ast k})) + \sum_{j=k}^{n-k} \binom{n}{j} \graphinv{F}(G^{\ast j}) \graphinv{F}(H^{\ast (n-j)}) } \\
	& \leq \sqrt[n]{O(\mathrm{poly}(n)) \, ((\graphinvasymp{F}(H) + \eps)^n + (\graphinvasymp{F}(G) + \eps)^n ) + \left( \graphinvasymp{F}(G) + \graphinvasymp{F}(H) + 2\eps \right)^n } \\
\end{align*}
Since $\lim_{n\to\infty} \sqrt[n]{\alpha^n + \beta^n + \gamma^n} = \max(\alpha,\beta,\gamma)$, the expression is dominated by the third exponential, and we conclude $\graphinvasymp{F}(G + H) \leq \graphinvasymp{F}(G) + \graphinvasymp{F}(H) + 2\eps$ by taking the limit $n\to\infty$. The claim now follows as $\eps \to 0$.
\end{proof}

Next, we will introduce our \emph{linear-like} condition on a semiring family and prove that this makes $\graphinvfrac{F}$ additive under joins and multiplicative under disjunctive products, and therefore semiring-homomorphic. Introducing this condition requires a bit more preparation.

For $S\subseteq V(G)$, we write $S^\perp$ for the set of all vertices that are adjacent to all vertices in $S$. The map $S\mapsto S^\perp$ implements a contravariant Galois adjunction, meaning that $S \subseteq T^\perp$ if and only if $T\subseteq S^\perp$ if and only if $S$ and $T$ are pairwise adjacent. This is equivalent to adjacency $S\sim T$ in the power graph $2^G$. It follows that the map $S\mapsto S^{\perp\perp}$ is a closure operation. We call those sets $S$ which satisfy $S^{\perp\perp} = S$ \emph{flats}, by analogy with flats in matroid theory. Since $S \subseteq T^\perp$ implies $T^{\perp\perp} \subseteq S^\perp$, we have that $S\sim T$ implies $S \sim T^{\perp\perp}$, and by the same token applied again, also $S^{\perp\perp} \sim T^{\perp\perp}$.

\begin{defn}
The \emph{rank} of a subset $S\subseteq V(G)$ is
\[
	\rk{S} := \omega(S^{\perp\perp}).
\]
\end{defn}

Thus $S$ has rank $\geq r$ if and only if there is an $r$-clique $C$ in $G$ such that for every $T\sim S$ in $2^G$, we also have $T\sim C$. Note that $C$ does not need to be contained in $S$. An interesting special case is when $S$ is itself a clique, where it may happen that $\rk{S} > |S|$, for example if $G$ consists of the clique $S$ and a disjoint clique $C$ larger than $S$ together with a separate cut vertex adjacent to all vertices in both $S$ and $C$, which gives $S^{\perp\perp} = S \cup C$.

\newcommand{\ind}[2]{{#1}|_{#2}}

For a subset $S\subseteq V(G)$, we write $\ind{G}{S}$ for the induced subgraph on $S$.

\begin{defn}
\label{ll}
A sequence of graphs $(F_n)_{n\in\N}$ is \emph{linear-like} if for every flat $S\subseteq V(F_n)$, we have a homomorphic equivalence $\ind{F_n}{S} \simeq F_{\rk{S}}$.
\end{defn}

In some cases, the homomorphic equivalence $\ind{F_n}{S} \simeq F_{\rk{S}}$ can be strengthened to isomorphism $\ind{F_n}{S} \cong F_{\rk{S}}$, but this is not always the case (\cref{symp_ex}).

The definition of linear-like sequence is strongly reminiscent of Brunet's \emph{orthomatroids}~\cite{brunet}, but weaker in a sense. In fact, the closure operation $S\mapsto S^{\perp\perp}$ may not even make $F_n$ into a matroid, even if we have isomorphisms $\ind{F_n}{S} \cong F_{\rk{S}}$. For example in \cref{lovasz}, the two vectors $(1,\pm \sqrt{2},0,\ldots)\in V(F_3)$ have empty orthogonal complement, so that the only flat which contains both is $F_3$ itself, which already has rank $3$. Hence $F_3$ is not a matroid.

However, the linear-like condition implies that if $\phi : G \to F_n$ is any homomorphism, then it factors as
\[
	G \to F_{\rk{\im{\phi}}} \to F_n,
\]
which we will make frequent use of.

\begin{lem}
For a linear-like semiring family $(F_n)$, we have $\omega(F_n) = n$.
\end{lem}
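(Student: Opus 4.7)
The lower bound $\omega(F_n) \geq n$ has already been established just before \cref{com_denom}, so only the reverse inequality $\omega(F_n) \leq n$ remains; write $a_n := \omega(F_n)$ for brevity. My starting observation is that the entire vertex set $V(F_n)$ is always a flat: since our graphs are simple, no vertex is adjacent to itself, so $V(F_n)^\perp = \emptyset$, and hence $V(F_n)^{\perp\perp} = \emptyset^\perp = V(F_n)$. Its rank is $\rk{V(F_n)} = \omega(V(F_n)) = a_n$.

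I would then invoke the linear-like hypothesis for this flat, which yields the homomorphic equivalence $\ind{F_n}{V(F_n)} = F_n \simeq F_{a_n}$. Because any homomorphism $K_k \to H$ into a simple graph is automatically injective (two distinct vertices of $K_k$ would otherwise force a self-loop in $H$), the clique number is monotone under graph homomorphisms, and hence invariant under $\simeq$. Applied to the equivalence above, this produces the fixed-point identity $a_n = a_{a_n}$.

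To finish, I would combine this fixed point with the superadditivity of $\omega$ built into the semiring family structure. Setting $m := a_n - n \geq 0$, the semiring family homomorphism $F_n + F_m \to F_{n+m} = F_{a_n}$ and the identity $\omega(G+H) = \omega(G) + \omega(H)$ give $a_n + a_{a_n - n} \leq a_{a_n}$, which after substituting $a_{a_n} = a_n$ collapses to $a_{a_n - n} \leq 0$. Combined with the already established lower bound $a_{a_n - n} \geq a_n - n \geq 0$, this forces $a_n - n = 0$, as desired.

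The only conceptual step is the recognition that $V(F_n)$ itself is always a flat, which lets the linear-like hypothesis feed directly into the additive structure of the family; from there the argument is a short mechanical calculation, so I expect no substantive obstacle.
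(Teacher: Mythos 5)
Your proof is correct and follows essentially the same route as the paper's: apply the linear-like condition to the whole vertex set (which is a flat) to get $F_n \simeq F_{\omega(F_n)}$, then combine the additivity of $\omega$ under joins with $F_n + F_{\omega(F_n)-n} \to F_{\omega(F_n)}$ and the lower bound $\omega(F_k)\geq k$ to force $\omega(F_n)\leq n$. The extra details you supply (that $V(F_n)^{\perp\perp}=V(F_n)$, and that $\omega$ is monotone under homomorphisms) are left implicit in the paper but are exactly the right justifications.
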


\begin{proof}
$\omega(F_n) \geq n$ follows from $F_1 + \ldots + F_1 \to F_n$, as already noted above. For the other inequality, applying the linear-like condition to the whole graph implies $F_n \simeq F_{\omega(F_n)}$. But since $F_n + F_{\omega(F_n) - n} \to F_{\omega(F_n)}$, we have
\[
	\omega(F_n) = \omega(F_{\omega(F_n)}) \geq \omega(F_n) + \omega(F_{\omega(F_n) - n}) \geq \omega(F_n) + (\omega(F_n) - n).
\]
Therefore $\omega(F_n) \leq n$.
\end{proof}

We now continue assuming that $(F_n)$ is a linear-like semiring family of graphs.

\begin{lem}
\label{addF}
If $F_k + F_\ell \to F_n$, then $k + \ell \le n$.
\end{lem}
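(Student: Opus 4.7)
The plan is to prove the inequality by tracking the clique number through the hypothesized homomorphism, since the preceding lemma has pinned down $\omega(F_n)$ exactly.

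First, I would recall the three ingredients already established or noted in the paper: (i) the clique number $\omega$ is monotone under graph homomorphisms, which is immediate from its definition $\omega(G) = \sup\{\,n \mid K_n \to G\,\}$ by composition of homomorphisms; (ii) $\omega$ is additive under graph joins, that is $\omega(G + H) = \omega(G) + \omega(H)$, as stated near the beginning of \cref{algebra}; and (iii) the lemma just proved, which gives $\omega(F_m) = m$ for every $m \in \N$ whenever $(F_n)$ is a linear-like semiring family.

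Applying these in sequence to the hypothesis $F_k + F_\ell \to F_n$ gives
\[
    k + \ell \;=\; \omega(F_k) + \omega(F_\ell) \;=\; \omega(F_k + F_\ell) \;\leq\; \omega(F_n) \;=\; n,
\]
which is the desired conclusion.

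There is essentially no obstacle: the argument is a one-line chain of equalities and a single inequality, and the only place the linear-like hypothesis enters is through the identity $\omega(F_n) = n$ from the previous lemma. All other steps are general properties of $\omega$ that hold for arbitrary graphs.
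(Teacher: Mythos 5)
Your proof is correct and is essentially identical to the one in the paper, which also deduces $k+\ell = \omega(F_k+F_\ell) \leq \omega(F_n) = n$ from the monotonicity and join-additivity of $\omega$ together with $\omega(F_m)=m$ from the preceding lemma.
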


\begin{proof}
By monotonicity of $\omega$ together with $\omega(F_k + F_\ell) = k + \ell$ and $\omega(F_n) = n$.
\end{proof}

\begin{lem}
\label{joindecomp}
If $G + H \to F_n$, then there is a decomposition $n = k + \ell$ with $G \to F_k$ and $H \to F_\ell$.
\end{lem}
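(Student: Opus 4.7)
The plan is to exploit the join structure of $G + H$ together with the flat operation $S \mapsto S^{\perp\perp}$ to split a given homomorphism $\phi : G + H \to F_n$ into two pieces, each landing in the induced subgraph on a flat, and then use the linear-like condition to identify each such induced subgraph (up to homomorphic equivalence) with some $F_k$ or $F_\ell$.

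Concretely, I would proceed as follows. Fix a homomorphism $\phi : G + H \to F_n$ and let $A := \phi(V(G))$ and $B := \phi(V(H))$. Since every vertex of $V(G)$ is adjacent to every vertex of $V(H)$ in $G + H$, and $F_n$ is a simple graph, the sets $A$ and $B$ must be disjoint and pairwise adjacent in $F_n$, i.e.\ $A \sim B$ in the power graph $2^{F_n}$. As noted before \cref{def_graph}'s discussion of flats, this implies $A^{\perp\perp} \sim B^{\perp\perp}$ in $2^{F_n}$, so $S := A^{\perp\perp}$ and $T := B^{\perp\perp}$ are disjoint flats, each vertex of $S$ being adjacent to each vertex of $T$. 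Setting $k := \rk{A}$ and $\ell := \rk{B}$, the linear-like condition yields $\ind{F_n}{S} \simeq F_k$ and $\ind{F_n}{T} \simeq F_\ell$.

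Since $A \subseteq S$ and $B \subseteq T$, the restrictions of $\phi$ give graph homomorphisms $G \to \ind{F_n}{S} \simeq F_k$ and $H \to \ind{F_n}{T} \simeq F_\ell$. Moreover, because $S$ and $T$ are disjoint with every $s \in S$ adjacent to every $t \in T$, the induced subgraph $\ind{F_n}{S \cup T}$ is exactly the join $\ind{F_n}{S} + \ind{F_n}{T} \simeq F_k + F_\ell$, and inclusion into $F_n$ provides a homomorphism $F_k + F_\ell \to F_n$. By \cref{addF}, this forces $k + \ell \leq n$.

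It remains to promote the inequality $k + \ell \leq n$ to an equality. For this I would observe that, within any semiring family, $F_j + F_1 \to F_{j+1}$ composed with the canonical inclusion of a summand gives $F_j \to F_{j+1}$, hence iterating $F_k \to F_{k+1} \to \cdots \to F_{n-\ell}$. Replacing $k$ by $n - \ell$ then yields $G \to F_{n-\ell}$ while still having $H \to F_\ell$, producing the required decomposition $n = (n-\ell) + \ell$. The only real subtlety is the clean separation step (the passage from $A \sim B$ to $S \sim T$ and the identification of the induced subgraph on $S \cup T$ with a join), which is where the linear-like hypothesis does its work; the rest is bookkeeping.
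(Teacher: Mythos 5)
Your proof is correct and follows essentially the same route as the paper's: pass from the adjacent images $\phi(V(G)) \sim \phi(V(H))$ to their flats, apply the linear-like condition to identify the induced subgraphs with $F_k$ and $F_\ell$, and invoke \cref{addF} to get $k + \ell \le n$. The only difference is that you make explicit the final padding step $F_k \to F_{k+1} \to \cdots \to F_{n-\ell}$ needed to upgrade $k + \ell \le n$ to an exact decomposition $n = k + \ell$, a detail the paper leaves implicit.
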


\begin{proof}
Let $\phi : G + H \to F_n$ be a homomorphism. Then $\im{\phi|_G} \sim \im{\phi|_H}$, and therefore also $\im{\phi|_G}^{\perp\perp} \sim \im{\phi|_H}^{\perp\perp}$. The claim now follows from the linear-like property and \cref{addF}.
\end{proof}

\begin{prop}
	$\graphinv{F}$ and $\graphinvfrac{F}$ are additive under joins.
\label{eta_additive}
\end{prop}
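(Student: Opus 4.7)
The plan is to combine the subadditivity already established in \cref{subinvs} with the matching superadditivity, which will come essentially for free from the key structural result \cref{joindecomp}.

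First I would handle $\graphinv{F}$. Let $n = \graphinv{F}(G + H)$, witnessed by a homomorphism $G + H \to F_n$. By \cref{joindecomp}, there is a decomposition $n = k + \ell$ with $G \to F_k$ and $H \to F_\ell$, so $\graphinv{F}(G) \leq k$ and $\graphinv{F}(H) \leq \ell$, yielding $\graphinv{F}(G) + \graphinv{F}(H) \leq n = \graphinv{F}(G + H)$. Combined with the reverse inequality from \cref{subinvs}, this gives additivity.

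Next I would transfer this to $\graphinvfrac{F}$ using the Galois adjunction of \cref{adj} together with the distributivity $G\ltimes d + H\ltimes d \cong (G + H)\ltimes d$ from \cref{plus_distribute}. Concretely, for any $\eps > 0$, pick $n,d\in\Nplus$ with $G + H \to F_n/d$ and $\frac{n}{d} \leq \graphinvfrac{F}(G + H) + \eps$. By the adjunction, $(G + H)\ltimes d \to F_n$, and by the distributivity this is the same as $G\ltimes d + H\ltimes d \to F_n$. Applying \cref{joindecomp} yields $n = k + \ell$ with $G\ltimes d \to F_k$ and $H\ltimes d \to F_\ell$, and one more application of the adjunction gives $G\to F_k/d$ and $H\to F_\ell/d$. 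Hence
\[
\graphinvfrac{F}(G) + \graphinvfrac{F}(H) \leq \frac{k}{d} + \frac{\ell}{d} = \frac{n}{d} \leq \graphinvfrac{F}(G + H) + \eps.
\]
Letting $\eps \to 0$ and combining with the subadditivity from \cref{subinvs} yields the desired equality.

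There is no serious obstacle here; the conceptual work has already been front-loaded into \cref{joindecomp}, whose proof relied on the linear-like property applied to the flats $\im{\phi|_G}^{\perp\perp}$ and $\im{\phi|_H}^{\perp\perp}$. The only care needed is to invoke the blow-up/fractionalization adjunction twice in the $\graphinvfrac{F}$ argument to reduce to the integral case, and to observe that the definition of $\graphinvfrac{F}$ as an infimum forces us to work with an $\eps$-approximation, which is why the final estimate is closed off by a limit rather than an equality at a single pair $(n,d)$.
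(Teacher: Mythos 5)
Your proof is correct and follows essentially the same route as the paper's: subadditivity from \cref{subinvs}, superadditivity for $\graphinv{F}$ directly from \cref{joindecomp}, and for $\graphinvfrac{F}$ an $\eps$-approximation combined with the adjunction of \cref{adj}, the distributivity of \cref{plus_distribute}, and \cref{joindecomp}. No gaps.
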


\begin{proof}
We have already shown subadditivity under joins for both invariants in \cref{subinvs}, so that it remains to prove superadditivity. For $\graphinv{F}$, this is by \cref{joindecomp}.

For $\graphinvfrac{F}$, the superadditivity is now similar to the previous arguments: choose $n$ and $d > 0$ such that
\[
	\graphinvfrac{F}(G + H) \ge \frac{n}{d} - \eps,
\]
as witnessed by some homomorphism $G + H \to F_n/d$. Then again $G\ltimes d + H\ltimes d \simeq (G + H)\ltimes d \to F_n$, which decomposes as $G\ltimes d \to F_k$ and $H\ltimes d \to F_\ell$ for suitable $k$ and $\ell$. Hence
\[
	\graphinvfrac{F}(G) + \graphinvfrac{F}(H) \le \frac{k}{d} + \frac{\ell}{d} \le \frac{n}{d} \le \graphinvfrac{F}(G + H) + \eps.
\]
Since $\eps$ was arbitrary, we must have $\graphinvfrac{F}(G) + \graphinvfrac{F}(H) \leq \graphinvfrac{F}(G + H)$.
\end{proof}

Next, we derive an alternative characterization of $\graphinvfrac{F}$, closely following the arguments of Bukh and Cox in the case of the fractional Haemers bound~\cite[Proposition~7]{bukh_cox}.

\begin{defn}
A \emph{rank-$r$-representation} of a graph $G$ with values in a linear-like semiring family $(F_n)$ is a homomorphism
\[
	\phi : G \to 2^{F_m}
\]
for some $m\in\N$ such that $\rk{\phi(v)} \ge r$ in $F_m$ for every $v\in V(G)$.
\end{defn}

\begin{prop}
\label{altetaf}
\[
	\graphinvfrac{F}(G) = \inf_{\phi, r} \, \left\{\: \frac{\rk{\bigcup_{v\in V(G)} \phi(v)}}{r} \: : \: \phi \textrm{ is a rank-}r\textrm{-representation of } G \:\right\}
\]
\end{prop}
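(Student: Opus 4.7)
The plan is to prove two inequalities separately. The easy direction is $\graphinvfrac{F}(G) \geq \inf_{\phi,r} \frac{\rk{\bigcup_v \phi(v)}}{r}$: any homomorphism $\phi : G \to F_n/d$ \emph{is} a rank-$d$-representation when reinterpreted as a map $G \to 2^{F_n}$, since each $\phi(v)$ is a $d$-clique and therefore has $\rk{\phi(v)} = \omega(\phi(v)^{\perp\perp}) \geq \omega(\phi(v)) = d$. Moreover $\rk{\bigcup_v \phi(v)} \leq \omega(F_n) = n$, where the last equality uses the linear-like property. Hence the ratio $\rk{\bigcup_v \phi(v)}/d \leq n/d$ bounds the right-hand infimum by the left, and the inequality follows.

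The substantive direction is $\graphinvfrac{F}(G) \leq \inf_{\phi,r} \frac{\rk{\bigcup_v \phi(v)}}{r}$. Given a rank-$r$-representation $\phi : G \to 2^{F_m}$, I would set $S := \bigcup_{v\in V(G)} \phi(v)$ and $r_0 := \rk{S} = \omega(S^{\perp\perp})$, and aim to build a homomorphism $G \to F_{r_0}/r$, which would yield $\graphinvfrac{F}(G) \leq r_0/r$ as desired. First, for each $v \in V(G)$, pick an $r$-clique $C_v \subseteq \phi(v)^{\perp\perp}$, which exists because $\rk{\phi(v)} \geq r$. Since $\phi(v) \subseteq S$ implies $\phi(v)^{\perp\perp} \subseteq S^{\perp\perp}$, all the $C_v$ lie inside the flat $S^{\perp\perp}$.

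The key observation is that if $v \sim w$ in $G$, then $\phi(v) \sim \phi(w)$ in $2^{F_m}$, which by the $\perp$-adjunction discussed in the paragraph preceding \cref{ll} upgrades to $\phi(v)^{\perp\perp} \sim \phi(w)^{\perp\perp}$, and hence $C_v$ and $C_w$ are pairwise adjacent; they must also be disjoint, because any common vertex would be adjacent to itself. Thus $v \mapsto C_v$ defines a homomorphism $G \to F_m/r$ whose image lies in the induced subgraph $\ind{F_m}{S^{\perp\perp}}$. Now the linear-like property (\cref{ll}) provides a homomorphism $h : \ind{F_m}{S^{\perp\perp}} \to F_{r_0}$, which preserves clique size (no collapsing on cliques in simple graphs), so each $h(C_v)$ is an $r$-clique in $F_{r_0}$; and whenever $v \sim w$, the sets $h(C_v)$ and $h(C_w)$ remain pairwise adjacent and disjoint (if $h(x)=h(y)$ for $x\in C_v$, $y\in C_w$ with $v\sim w$, then $x\sim y$ would force a loop). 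Composing, we obtain the desired homomorphism $v \mapsto h(C_v)$ from $G$ to $F_{r_0}/r$.

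The main obstacle is the last step: transporting the clique decomposition from $F_m$ down to $F_{r_0}$ while preserving both the clique-structure of each $C_v$ and the mutual disjointness/adjacency among them. This is precisely what the linear-like hypothesis is engineered to provide, and the argument parallels the one of Bukh--Cox in the fractional Haemers setting, specialized to our more general framework. Once these checks are in place, the two inequalities combine to give the claimed formula.
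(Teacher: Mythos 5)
Your proposal is correct and follows essentially the same route as the paper's proof: the easy direction via the inclusion $F_n/d \to 2^{F_n}$, and the substantive direction by replacing each $\phi(v)$ with its flat, restricting to $r$-cliques to get $G \to F_m/r$, and then invoking the linear-like property on the flat generated by the image. You merely spell out more explicitly the detail the paper leaves implicit, namely that the homomorphic equivalence $\ind{F_m}{S^{\perp\perp}} \simeq F_{\rk{S}}$ transports the $r$-cliques injectively and preserves their mutual disjointness and adjacency.
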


\begin{proof}
We will show that every $\frac{n}{d}$ in the defining infimum \eqref{etaf} is dominated by some $\frac{\rk{\bigcup_{v\in V(G)} \phi(v)}}{r}$ and vice versa.

Given $G \to F_n/d$, we get $\phi$ as the composite $G \to F_n/d \to 2^{F_n}$, where $F_n/d \to 2^{F_n}$ is the canonical inclusion homomorphism. This lands in rank $d$ by definition, and trivially satisfies $\frac{\rk{\bigcup_{v\in V(G)} \phi(v)}}{d} \le \frac{n}{d}$.

Conversely, let $\phi : G \to 2^{F_m}$ be a rank-$r$-representation. We can assume without loss of generality that each $\phi(v)$ is a flat, since extending from $\phi(v)$ to $\phi(v)^{\perp\perp}$ does not decrease the rank and preserves the adjacency relations as well. Then we can restrict each $\phi(v)$ to an $r$-clique, which is guaranteed to exist by the definition of rank. Doing so results in a homomorphism $G \to F_m / r$, or equivalently $G\ltimes r \to F_m$. Since the rank of the image of this homomorphism is upper bounded by $n := \rk{\bigcup_{v\in V(G)} \phi(v)}$ by construction, it factors through $F_n$ by the linearity-like condition.
\end{proof}

\begin{prop}
	\label{lex_disj_mult}
$\graphinvfrac{F}$ is multiplicative on lexicographic and on disjunctive products.
\end{prop}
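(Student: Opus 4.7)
The plan is to sandwich the two claimed equalities between what we already have in hand. \cref{subinvs} already provides submultiplicativity $\graphinvfrac{F}(G\ast H)\leq\graphinvfrac{F}(G)\,\graphinvfrac{F}(H)$, and the canonical $G\ltimes H\to G\ast H$ gives $\graphinvfrac{F}(G\ltimes H)\leq\graphinvfrac{F}(G\ast H)$ by monotonicity. So once I establish supermultiplicativity $\graphinvfrac{F}(G)\,\graphinvfrac{F}(H)\leq \graphinvfrac{F}(G\ltimes H)$, both $\graphinvfrac{F}(G\ltimes H)$ and $\graphinvfrac{F}(G\ast H)$ will be pinched equal to $\graphinvfrac{F}(G)\,\graphinvfrac{F}(H)$. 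The degenerate case in which $G$ or $H$ is empty is trivial, so I will assume both graphs have at least one vertex, which guarantees in particular $\graphinvfrac{F}(H)\geq 1$.

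For the core inequality I intend to mimic the proof of \cref{altetaf}. Given $\eps>0$, pick $n,d\in\Nplus$ with $G\ltimes H\to F_n/d$ and $\frac{n}{d}\leq\graphinvfrac{F}(G\ltimes H)+\eps$. By \cref{adj} together with the (easily verified) associativity $(G\ltimes H)\ltimes d\cong G\ltimes(H\ltimes d)$, this is the same as a homomorphism $\psi: G\ltimes(H\ltimes d)\to F_n$. For each $v\in V(G)$, set $S_v:=\psi(\{v\}\times V(H\ltimes d))$ and $\phi(v):=S_v^{\perp\perp}$. When $v\sim v'$ in $G$, every vertex of $\{v\}\times V(H\ltimes d)$ is adjacent to every vertex of $\{v'\}\times V(H\ltimes d)$ in $G\ltimes(H\ltimes d)$, so $S_v\sim S_{v'}$ in $2^{F_n}$, and then $\phi(v)\sim\phi(v')$ by the observation preceding \cref{ll} that $\perp\perp$-closure preserves adjacency. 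Hence $\phi$ is a homomorphism $G\to 2^{F_n}$.

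The rank control will come from the linear-like hypothesis. The restriction $\psi|_{\{v\}\times V(H\ltimes d)}$ is a homomorphism $H\ltimes d\to\ind{F_n}{S_v}\to\ind{F_n}{\phi(v)}\simeq F_{\rk{\phi(v)}}$, so $\rk{\phi(v)}\geq\graphinv{F}(H\ltimes d)\geq d\,\graphinvfrac{F}(H)$, where the last step uses \cref{adj}. Setting $r:=\min_v\rk{\phi(v)}$ makes $\phi$ a rank-$r$-representation of $G$; since $\bigcup_v\phi(v)\subseteq V(F_n)$ also forces $\rk{\bigcup_v\phi(v)}\leq\omega(F_n)=n$, \cref{altetaf} will yield
\[
    \graphinvfrac{F}(G)\leq\frac{\rk{\bigcup_v\phi(v)}}{r}\leq\frac{n}{d\,\graphinvfrac{F}(H)}.
\]
Rearranging and letting $\eps\to 0$ then gives $\graphinvfrac{F}(G)\,\graphinvfrac{F}(H)\leq\graphinvfrac{F}(G\ltimes H)$, as required.

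The main obstacle I anticipate is the verification that $\phi$ is an honest homomorphism into the power graph with a uniform per-vertex rank bound: this depends on both the closure-stability of pairwise adjacency in $2^{F_n}$ and on the linear-like condition being strong enough to upgrade the concrete map $H\ltimes d\to\ind{F_n}{\phi(v)}$ into the formal inequality $\rk{\phi(v)}\geq\graphinv{F}(H\ltimes d)$. Once those pieces are secured, everything else is routine bookkeeping with the adjunction of \cref{adj} and the definitions of $\graphinv{F}$ and $\graphinvfrac{F}$.
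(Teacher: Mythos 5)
Your proof is correct and follows essentially the same route as the paper's: reduce to supermultiplicativity under the lexicographic product via \cref{subinvs}, turn a witness $G\ltimes H \to F_n/d$ into a rank-$r$-representation of $G$ by taking the (closures of the) images of the fibre maps $H\ltimes d \to F_n$, bound $r$ below by $d\,\graphinvfrac{F}(H)$ using the linear-like property, and conclude via \cref{altetaf}. The only cosmetic differences are that you pass through $(G\ltimes H)\ltimes d \cong G\ltimes(H\ltimes d)$ explicitly and take the $\perp\perp$-closure of each image before invoking the rank bound, neither of which changes the argument.
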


Here, the statement about the lexicographic product is due to Chris Cox, who observed that the following proof still goes through; the argument is again an adaptation of the one of Bukh and Cox~\cite{bukh_cox}.

\begin{proof}
Thanks to \cref{subinvs}, we only need to prove supermuliplicativity with respect to lexicographic product. So let $\graphinvfrac{F}(G \ltimes H) \ge \frac{n}{d} - \eps$, corresponding to some homomorphism $\phi : G \ltimes H \to F_n/d$. For fixed $v\in V(G)$, consider the associated homomorphism $\phi(v,-) : H\ltimes d \to F_n$, and put
\[
	r := \min_v \rk{\im{\phi(v,-)}}.
\]
Then $\graphinvfrac{F}(H) \le \frac{r}{d}$ by the linear-like property. Finally, consider the homomorphism
\[
	G \to 2^{F_n}, \qquad v \mapsto \im{\phi(v,-)}.
\]
It is a rank-$r$-representation by definition of $r$. \cref{altetaf} therefore gives $\graphinvfrac{F}(G) \leq \frac{n}{r}$. In total, we have
\[
	\graphinvfrac{F}(G) \, \graphinvfrac{F}(H) \le \frac{n}{r} \cdot \frac{d}{d} = \frac{n}{d} \le \graphinvfrac{F}(G\ltimes H) + \eps,
\]
which is enough.
\end{proof}

Comparing the three invariants, the inequality $\graphinvasymp{F}(G) \leq \graphinv{F}(G)$ is trivial, and likewise $\graphinvfrac{F}(G) \leq \graphinv{F}(G)$. The latter inequality can now be strenghtened:

\begin{cor}
For every finite graph $G$, we have $\graphinvfrac{F}(G) \leq \graphinvasymp{F}(G)$.
\end{cor}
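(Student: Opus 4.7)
The plan is to exploit the multiplicativity of $\graphinvfrac{F}$ on disjunctive products established in \cref{lex_disj_mult}, together with the trivial upper bound $\graphinvfrac{F}(F_k) \leq k$.

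First I would observe that for every $k \in \N$, we have $\graphinvfrac{F}(F_k) \leq k$: indeed, the identity homomorphism $F_k \to F_k = F_k/1$ witnesses that $k/1$ lies in the infimum defining $\graphinvfrac{F}(F_k)$. Next, for each $n \in \Nplus$, the very definition of the $F$-number yields a homomorphism
\[
    G^{\ast n} \to F_{\graphinv{F}(G^{\ast n})}.
\]
By monotonicity of $\graphinvfrac{F}$ under graph homomorphisms, combined with the bound above, this gives
\[
    \graphinvfrac{F}(G^{\ast n}) \leq \graphinvfrac{F}\left(F_{\graphinv{F}(G^{\ast n})}\right) \leq \graphinv{F}(G^{\ast n}).
\]

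Now I would apply the multiplicativity of $\graphinvfrac{F}$ under disjunctive products (\cref{lex_disj_mult}) to rewrite the left-hand side as $\graphinvfrac{F}(G)^n$. Taking $n$-th roots yields
\[
    \graphinvfrac{F}(G) \leq \sqrt[n]{\graphinv{F}(G^{\ast n})},
\]
and taking the infimum over $n \in \Nplus$ produces the desired inequality $\graphinvfrac{F}(G) \leq \graphinvasymp{F}(G)$.

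There is no real obstacle here; everything reduces to assembling the pieces already proved, the only conceptual ingredient being the harmless observation $\graphinvfrac{F}(F_k) \leq k$. In particular, one does not need to invoke Fekete's lemma to replace the infimum by a limit, since the chain of inequalities is valid for every individual $n$.
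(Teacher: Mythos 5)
Your proof is correct and follows essentially the same route as the paper: the trivial bound $\graphinvfrac{F}(G^{\ast n}) \leq \graphinv{F}(G^{\ast n})$, multiplicativity of $\graphinvfrac{F}$ under disjunctive products, and taking $n$-th roots. The only cosmetic difference is that you establish the trivial bound via the detour $\graphinvfrac{F}(F_k)\leq k$ applied to the (possibly infinite) graph $F_k$, whereas one can get it directly by composing $G^{\ast n}\to F_m \cong F_m/1$; this is harmless.
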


\begin{proof}
Apply the trivial inequality $\graphinvfrac{F}(G) \leq \graphinv{F}(G)$ to the powers $G^{\ast n}$, use multiplicativity of $\graphinvfrac{F}$, and take the limit $n\to\infty$.
\end{proof}

We now summarize the results obtained so far into our main theorem, of which~\ref{etaf_spectral} is the main part.

\begin{thm}
	For any linear-like semiring family $F = (F_n)$, the three graph invariants $\graphinv{F}$, $\graphinvfrac{F}$ and $\graphinvasymp{F}$ have the following properties:
\begin{enumerate}
\item All are monotone under graph homomorphisms, subadditive under joins, and submultiplicative under both lexicographic and disjunctive products.
\item For $\graphinv{F}$, subadditivity holds with equality.
\item\label{etaf_spectral} For $\graphinvfrac{F}$, subadditivity and both forms of submultiplicativity hold with equality. In particular, $\graphinvfrac{F}$ \textbf{is semiring-homomorphic}.
\item For any finite graph $G$,
\[
	\graphinvfrac{F}(G) \le \graphinvasymp{F}(G) \le \graphinv{F}(G).
\]
\item On $K_n$, all three invariants take the value $n$.
\end{enumerate}
\label{mainthm}
\end{thm}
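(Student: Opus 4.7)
The theorem is a compilation of results already established in this section; the plan is to assemble them and tie up the few loose ends not explicitly stated as previous results. For part (a), monotonicity of each of $\graphinv{F}$, $\graphinvfrac{F}$, $\graphinvasymp{F}$ under graph homomorphisms is immediate from the definition of each invariant in terms of homomorphisms into the $F_n$, while subadditivity under joins and submultiplicativity under both lexicographic and disjunctive products is precisely \cref{subinvs}. Part (b), the additivity of $\graphinv{F}$ under joins, is the first half of \cref{eta_additive}, which rests on the decomposition established in \cref{joindecomp}.

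For part (c), the additivity of $\graphinvfrac{F}$ under joins is the second half of \cref{eta_additive}, and the multiplicativity of $\graphinvfrac{F}$ under lexicographic and disjunctive products is \cref{lex_disj_mult}. To upgrade this to the semiring-homomorphism property, the only missing ingredient is the normalization $\graphinvfrac{F}(K_1) = 1$. I would verify this by observing that a homomorphism $K_1 \to F_n/d$ exists iff $F_n$ contains a $d$-clique, which by the identity $\omega(F_n) = n$ (proved right after \cref{ll}) is equivalent to $d \leq n$; taking the infimum yields $\graphinvfrac{F}(K_1) = 1$. Combined with additivity, multiplicativity, and monotonicity, this identifies $\graphinvfrac{F}$ as an order-preserving semiring homomorphism $\FGraph \to \R_+$.

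For part (d), the inequality $\graphinvasymp{F}(G) \leq \graphinv{F}(G)$ is immediate from the infimum in the definition of $\graphinvasymp{F}$ upon taking $n = 1$, while $\graphinvfrac{F}(G) \leq \graphinvasymp{F}(G)$ is the corollary immediately preceding the theorem. For part (e), I would compute the values on $K_n$ directly: one has $\graphinv{F}(K_n) = n$ since $K_n \to F_m$ iff $\omega(F_m) \geq n$ iff $m \geq n$; additivity then gives $\graphinvfrac{F}(K_n) = n \, \graphinvfrac{F}(K_1) = n$; and the isomorphism $K_n \ast K_n \cong K_{n^2}$ applied iteratively yields $\graphinv{F}(K_n^{\ast m}) = n^m$, whence $\graphinvasymp{F}(K_n) = \lim_{m\to\infty} \sqrt[m]{n^m} = n$.

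Since all substantive work has been done in the preceding propositions, there is no real obstacle to overcome; the only points requiring fresh, and entirely routine, verification are the normalization $\graphinvfrac{F}(K_1) = 1$ and the explicit evaluation on complete graphs in part (e). The proof is therefore essentially a matter of careful bookkeeping to confirm that the collected statements line up with the phrasing of the theorem.
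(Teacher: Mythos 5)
Your proposal is correct and takes essentially the same route as the paper, which states \cref{mainthm} explicitly as a summary of \cref{subinvs}, \cref{eta_additive}, \cref{lex_disj_mult} and the unnumbered corollary comparing $\graphinvfrac{F}$ with $\graphinvasymp{F}$. The only items the paper leaves implicit are exactly the routine checks you supply, namely the normalization $\graphinvfrac{F}(K_1)=1$ and the evaluation of all three invariants on $K_n$ via $\omega(F_m)=m$, and your verifications of these are sound.
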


We do not know whether the inequality $\graphinvfrac{F} \leq \graphinvasymp{F}$ is strict, or whether it is necessarily an equality, perhaps under suitable additional conditions on the semiring family $(F_n)$.

Before getting to our examples, we state a comparison criterion for the semiring-homomorphic graph invariants induced by two linear-like semiring families:

\begin{prop}
Let $F = (F_n)$ and $F' = (F'_n)$ be linear-like semiring families such that there is $C \in\N$ together with a homomorphism $F_n \to F'_{Cn}$ for every $n\in\N$ which takes sets of rank $r$ to sets of rank at least $Cr$ for every $r\in\N$. Then we have
\[
	\graphinvfrac{F'}(G) \leq \graphinvfrac{F}(G)
\]
for every finite graph $G$.
\label{frac_compare}
\end{prop}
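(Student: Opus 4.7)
The plan is to prove the inequality pointwise by lifting a homomorphism $\psi : G \to F_n/d$ that witnesses $\graphinvfrac{F}(G) \approx \frac{n}{d}$ to a homomorphism $G \to F'_{Cn}/(Cd)$. Since the ratio is invariant under this lifting, $\tfrac{Cn}{Cd} = \tfrac{n}{d}$, passing to the infimum over such approximations immediately gives $\graphinvfrac{F'}(G) \leq \graphinvfrac{F}(G)$.

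Concretely, I would fix $\eps > 0$ and pick $n, d \in \Nplus$ together with $\psi : G \to F_n/d$ such that $\tfrac{n}{d} \leq \graphinvfrac{F}(G) + \eps$. Writing $\phi' : F_n \to F'_{Cn}$ for the given homomorphism, I observe that for each $v \in V(G)$, the set $\psi(v)$ is a $d$-clique in $F_n$, so $\phi'(\psi(v))$ is a $d$-clique in $F'_{Cn}$ (a graph homomorphism is injective on any clique in a simple graph, since a collision would produce a self-loop). Since $\psi(v)$ contains a $d$-clique, $\rk{\psi(v)} \geq d$, and the rank-preservation hypothesis gives $\rk{\phi'(\psi(v))} \geq Cd$. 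Hence I can fix a $(Cd)$-clique $C_v \subseteq \phi'(\psi(v))^{\perp\perp}$ in $F'_{Cn}$, defining a map $V(G) \to V(F'_{Cn}/(Cd))$.

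The key verification is that this map is a graph homomorphism. For $v \sim w$ in $G$, adjacency in $F_n/d$ gives $\psi(v) \sim \psi(w)$ in $2^{F_n}$, which upon applying $\phi'$ yields $\phi'(\psi(v)) \sim \phi'(\psi(w))$ in $2^{F'_{Cn}}$ (disjointness again follows from the no-self-loop argument). Unfolding via the Galois adjunction $S \subseteq T^\perp \iff T \subseteq S^\perp$, I get that $\phi'(\psi(w))^{\perp\perp}$ lies in $\phi'(\psi(v))^\perp$ (because $\phi'(\psi(v))^\perp$ is a flat containing $\phi'(\psi(w))$), and symmetrically for the other side. Hence $\phi'(\psi(v))^{\perp\perp}$ and $\phi'(\psi(w))^{\perp\perp}$ are pairwise adjacent and disjoint in $F'_{Cn}$, which passes to the chosen subcliques: $C_v \sim C_w$ in $F'_{Cn}/(Cd)$. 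This produces the desired $G \to F'_{Cn}/(Cd)$, yielding $\graphinvfrac{F'}(G) \leq \tfrac{Cn}{Cd} = \tfrac{n}{d} \leq \graphinvfrac{F}(G) + \eps$; letting $\eps \to 0$ completes the proof.

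The only nontrivial part is the adjacency verification in the third step, which amounts to a short manipulation of the contravariant Galois adjunction $S \mapsto S^\perp$ combined with the fact that perp-sets are flats; everything else is essentially forced by the definitions of $\graphinvfrac{F}$, the operation $(-)/d$, and the rank-preservation hypothesis.
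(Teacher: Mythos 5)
Your proof is correct and follows essentially the same route as the paper: the paper establishes the graph homomorphism $F_n/d \to F'_{Cn}/Cd$ directly by sending each $d$-clique $C_1$ to a $Cd$-clique inside $\phi_n(C_1)^{\perp\perp}$, using the rank hypothesis for existence and the implication $S\sim T \Rightarrow S^{\perp\perp}\sim T^{\perp\perp}$ for adjacency, exactly as you do after precomposing with $\psi$. Your extra care about disjointness and injectivity on cliques fills in details the paper leaves implicit; no substantive difference.
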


\begin{proof}
	We show that if $G \to F_n / d$, then also $G \to F'_{Cn} / Cd$, from which the claim follows. But this is because of $F_n / d \to F'_{Cn} / Cd$, which we show like this: the assumed homomorphisms $\phi_n : F_n \to F'_{Cn}$ take $d$-cliques $C_1,C_2\subseteq V(F_n)$ with $C_1 \sim C_2$ to $d$-cliques $\phi_n(C_1) \sim \phi_n(C_2)$, which implies $\phi_n(C_1)^{\perp\perp} \sim \phi_n(C_2)^{\perp\perp}$. Since these flats are of rank at least $Cd$, each of them must itself contain a $Cd$-clique. Choosing such $Cd$-cliques arbitrarily results in the desired graph homomorphism $F_n / d \to F'_{Cn} / Cd$.
\end{proof}

\section{Recovering all known semiring-homomorphic graph invariants}
\label{sec_recover}

We now illustrate how \cref{mainthm}\ref{etaf_spectral} lets us reconstruct all known examples of semiring-homomorphic graph invariants. The corresponding examples of linear-like semiring families underline their geometric character.

\begin{ex}
\label{frac_chrom}
The very simplest semiring family $(F_n)$ is $F_n = K_n$, and it is easy to see that it is indeed linear-like. In this case, we get the fractional chromatic number as $\graphinvfrac{F}$, and it is well-known that $\graphinvfrac{F} = \graphinvasymp{F}$. The properties now follow from \cref{mainthm} are well-known in this case; for example, the multiplicativity of the fractional chromatic number under lexicographic products is~\cite[Corollary~3.4.5]{SU}.
\end{ex}

\newcommand{\field}{\mathbb{F}}

\begin{ex}
\label{symm_ex}
Let $\field$ be a field with involution $\alpha \mapsto \bar{\alpha}$ such that the subfield of fixpoints of the involution is a Euclidean field. The paradigmatic examples are $\field = \R$ with the identity involution, or $\field = \C$ with complex conjugation.

These assumptions guarantee that every finite-dimensional Hilbert space over $\field$ has an orthonormal basis, where a Hilbert space is defined as a vector space equipped with a positive definite hermitian sesquilinear form. The existence of an orthonormal basis follows e.g.~using Gram--Schmidt orthogonalization, resulting in the well-known fact that a hermitian form over any field with involution can be diagonalized~\cite[p.~543]{jacobson}, and then normalizing by the relevant square roots. We use $\field^n$ with its standard hermitian inner product as a standard $n$-dimensional Hilbert space. We then define a graph $F_n$ with vertex set given by the projective space $V(F_n) := \mathbf{P}^{n-1}(\field)$, and we declare two vertices represented by vectors $\alpha,\beta\in \field^n\setminus\{0\}$ to be adjacent if and only if $\langle \alpha, \beta \rangle = 0$. Up to homomorphic equivalence, we can also work with the set of unit vectors with respect to orthogonality as adjacency, or even with all of $\field^n\setminus\{0\}$.

Next, we show that $(F_n)$ is a semiring family. Using the canonical inclusions $\field^n \to \field^n \oplus \field^m$ and $\field^m \to \field^n \oplus \field^m$ together with the canonical isomorphism $\field^n \oplus \field^m \cong \field^{n+m}$, we obtain the desired homomorphism $F_n + F_m \to F_{n+m}$ in the form of maps $\mathbf{P}^{n-1}(\field) + \mathbf{P}^{m-1}(\field) \to \mathbf{P}^{n + m - 1}(\field)$ which preserve orthogonality. Similarly, the tensor product map
\[
	\field^n \times \field^m \longrightarrow \field^n \otimes \field^m \cong \field^{nm}, \qquad (\alpha,\beta) \longmapsto \alpha \otimes \beta
\]
also has the property that if $\alpha \sim \alpha'$ or $\beta \sim \beta'$, then also $\alpha \otimes \beta \sim \alpha' \otimes \beta'$. This proves that $F_n \ast F_m \to F_{nm}$ as well. We thus have a semiring family of graphs.

Now for a set of vertices $S$, the flat $S^{\perp\perp}$ coincides with the linear span of $S$: it must contain the linear span since every vector orthogonal to $S$ is also orthogonal to every linear combination; and conversely, Gram--Schmidt orthogonalization shows that $S^{\perp}$ is a subspace whose dimension is $n - \rk{S}$, and is therefore complementary to the span $\mathrm{lin}_\field(S)$. So in this case, the linearity-like condition of \cref{ll} is straightforward to see.

Applying \cref{mainthm} therefore produces a semiring-homomorphic graph invariant for every field $\field$ satisfying the present assumptions.
For $\field = \C$, this specializes to the \emph{projective rank} introduced by Man{\v{c}}inska and Roberson~\cite[Section~6]{MR}, essentially by definition of the latter. For $\field = \R$, the resulting semiring-homomorphic invariant turns out to be the same~\cite[Section~6]{MR}, since \cref{frac_compare} implies bidirectional inequality: complexification induces graph homomorphisms $\mathbf{P}^{n-1}(\R) \to \mathbf{P}^{n-1}(\C)$ satisfying the hypotheses with $C = 1$; conversely, regarding $\C^n$ as a real vector space of twice the dimension (and taking the real part of the inner product) results in a graph homomorphism $\mathbf{P}^{n-1}(\C) \to \mathbf{P}^{2n-1}(\R)$ satisfying the hypotheses with $C = 2$. Thus using $\field = \R$ recovers projective rank as well. \cref{mainthm} in this case also recovers known properties, such as the multiplicativity~\cite[Theorem~27]{CMal}.

More generally, if $\field$ is a real closed field or the imaginary quadratic extension of a real closed field, then the resulting invariants coincide with those arising from $\field = \R$ or $\field = \C$, respectively, since the existence of a graph homomorphism $G\to \mathbf{P}^{n-1}(\field)$ is a sentence in the first-order logic of ordered fields, and all real closed field have the same first-order theory~\cite[Section~5.4]{model_theory}.
\end{ex}

The previous example involves inner product spaces for symmetric (or hermitian) inner products. 
It therefore seems natural to ask whether there could also be \emph{symplectic} versions of these semiring families which give semiring-homomorphic graph invariants. So far we have not been able to make this idea work; the main problem is that the tensor product of antisymmetric matrices is \emph{symmetric} rather than antisymmetric.

Nevertheless, also the Haemers bounds~\cite{haemers} and fractional Haemers bounds~\cite{bukh_cox} can be interpreted in terms of bilinear forms~\cite{peeters}:

\begin{ex}
\label{symp_ex}
Let $\field$ be any field, and suppose that $U$ and $W$ are vector spaces over $\field$ together with a bilinear form $\beta : U \times W \to \field$. Choosing bases for $U$ and $W$ represents $\beta$ by a matrix $M$ with entries $M_{ij} := \beta(u_i, w_j)$. Following Peeters~\cite[p.~423]{peeters}, we associate to this data a graph $O_\beta$ with vertex set
\[
	V(O_\beta) := \{\: (x,y) \in U \times W \mid \beta(x,y) = 1 \:\},
\]
and adjacency $(x,y) \sim (x',y')$ if and only if $\beta(x,y') = \beta(x',y) = 0$. The \emph{left kernel} $K\subseteq U$ is the subspace
\[
	K := \{\: x\in U \mid \beta(x,y) = 0 \:\: \forall y \in W \:\},
\]
and similarly the \emph{right kernel} of $\beta$ is the subspace of $W$ given by
\[
	L := \{\: y\in W \mid \beta(x,y) = 0 \:\: \forall x \in U \:\}.
\]
Choosing complementary subspaces $U'$ and $W'$ results in direct sum decompositions $U = K \oplus U'$ and $W = L \oplus W'$. We write $\beta' : U' \times W' \to \field$ for the restriction of $\beta$ to these complementary subspaces, and claim that there is a homomorphic equivalence $O_\beta \simeq O_{\beta'}$. As a homomorphism $O_{\beta'} \to O_\beta$, we can simply choose the inclusion map. For $O_\beta \to O_{\beta'}$, we use the projection maps $U \to U'$ and $W \to W'$, which commute with the bilinear form by the complementarity to kernels assumption, and therefore indeed $O_\beta \simeq O_{\beta'}$. Moreover, since $\beta'$ is nondegenerate by construction, we also must have $\dim{U'} = \dim{W'}$; by choosing suitable bases, we can assume $U' = W' = \field^n$, and that $\beta'$ is the standard inner product
\[
	\langle -,-\rangle_n : \field^n \times \field^n \to \field
\]
without loss of generality. In this way, we see that every $O_\beta$ is homomorphically equivalent to $O_{\langle -,-\rangle_n}$ for $n = \rk{\beta}$.

So for $n\in\N$, we consider the graph $F_n := O_{\langle -,-\rangle_n}$. We thus have
\[
	V(F_n) := \{\, (x,y) \in \field^n \times \field^n \mid \langle x,y\rangle = 1 \,\},
\]
with $(x,y) \sim (x',y')$ if and only if $\langle x,y'\rangle = \langle x',y\rangle = 0$. Then a $k$-clique consists of $x_1,\ldots,x_k\in \field^n$ and $y_1,\ldots,y_k\in \field^n$ such that $\langle x_i,y_j\rangle = \delta_{ij}$, and it follows that the $(x_i)$ and $(y_i)$ are each linearly independent. This implies $\omega(F_n) \leq n$, and $\omega(F_n) = n$ follows since the standard basis achieves the bound. In the situation of the graph $O_\beta$ above, we conclude $\omega(O_\beta) = \rk{\beta}$.

Using the obvious isomorphism $\field^n \oplus \field^m \cong \field^{n + m}$ gives a homomorphism $F_n \to F_{n + m}$ upon sending $(x,y) \in F_n$ to $(x \oplus 0, y \oplus 0)$, and similarly for $F_m \to F_{n + m}$ using the second component. These two homomorphisms assemble to $F_n + F_m \to F_{n + m}$.

Concerning compatibility with the disjunctive product, we similarly fix an isomorphism $\field^{n m} \cong \field^n \otimes \field^m$ induced by a bijection between the components, and consider the map
\[
	(x_1, y_1) \times (x_2, y_2) \longmapsto (x_1 \otimes x_2, y_1 \otimes y_2).
\]
The resulting compatibility between tensor product and canonical pairing shows that this indeed respects the edges of our graphs, resulting in a homomorphism $F_n \ast F_m \to F_{nm}$.

Concerning the linear-like property, we use the fact that every flat is the orthogonal complement of some collection of vertices $\{(x_i,y_i)\}_{i\in I}$ with some potentially infinite index set $I$. Then the orthogonal complement consists of all vertices $(x',y')$ with $x'\in U$ and $y'\in W$, where $U\subseteq \field^n$ is the subspace
\[
	U = \{\: x' \in \field^n \mid \langle x', y_i \rangle = 0 \:\;\forall i \:\},
\]
and similarly,
\[
	W = \{\: y' \in \field^n \mid \langle x_i, y' \rangle = 0 \:\;\forall i \:\}.
\]
As we saw above, the resulting graph is indeed homomorphically equivalent to $F_k$, where $k$ is the rank of the bilinear pairing between $U$ and $W$. This shows that the linear-like condition indeed holds, where the rank is the correct one due to the discussion of clique numbers above.

We now claim that the associated invariants $\graphinv{F}(G)$ and $\graphinvfrac{F}(G)$ coincide with the Haemers bound~\cite{haemers} and the fractional Haemers bound~\cite{blasiak,bukh_cox}\footnote{The fractional Haemers bound was first introduced as \emph{fractional minrank} in~\cite{blasiak}.}, respectively, of the complementary graph $\bar{G}$ (see also~\cite{peeters}). Using the notation of~\cite{bukh_cox}, the Haemers bound of $\bar{G}$ is the smallest rank of a matrix $M \in \field^{V(G) \times V(G)}$ with $M_{aa} = 1$ for all $a\in V(G)$ and $M_{ab} = 0$ for all adjacencies $a\sim b$ in $G$. Let $M$ be such a matrix achieving the smallest rank, for which we write $k$. Then $M$ defines a bilinear pairing $\beta : \field^{V(G)} \times \field^{V(G)} \to \field$ given by $(x,y)\mapsto x^t M y$. We have $O_{\beta} \simeq F_k$ per the above. Since $G \to O_{\beta}$ by the assumptions on $M$, we also conclude $G \to F_k$.

Conversely, suppose that we have a homomorphism $G \to F_k$, encoded in vertex labellings $x : G \to \field^k$ and $y : G \to \field^k$. Then the matrix $M_{vw} := \langle x_v,y_w \rangle$ has the required properties by construction. Hence our $\graphinv{F}(G)$ coincides with the Haemers bound of $\bar{G}$. Using~\cite[Proposition~6]{bukh_cox}, it now also follows that our $\graphinvfrac{F}(G)$ coincides with the fractional Haemers bound of $\bar{G}$.
\end{ex}

Intuitively, we can now also understand the (fractional) chromatic number of \cref{frac_chrom} in a new light: it plays the role of the (fractional) Haemers bound over the `field with one element'~\cite{Fun}.

\begin{ex}
\label{lovasz}
We now explain how the Lov\'asz number~\cite{lovasz} arises via our construction.

Let $\ell^2$ be the real Hilbert space of square integrable sequences $(x_i)_{i\in\N}$, and consider
\[
	F_n := \{ \: x \in \ell^2 \mid x_0 = 1, \: \|x\|^2 = n \:\},
\]
considered as a graph with orthogonality as adjacency. Equivalently, we can define $F_n$ by using the unit vectors $x$ in any separable Hilbert space which satisfy in addition $\langle c,x\rangle = n^{-1/2}$ for some fixed unit vector $c$.

We get $F_n + F_m \to F_{n + m}$ via, using the shorthand notation $x = (x_0, x_+)$,
\begin{align}
\begin{split}
\label{lovasz_add}
	F_n \to \R^2 \oplus \ell^2 \oplus \ell^2,\qquad & x \longmapsto \left(1,\, \sqrt{\frac{m}{n}},\, \sqrt{1+\frac{m}{n}}\cdot x_+,\, 0\right) \\
	F_m \to \R^2 \oplus \ell^2 \oplus \ell^2,\qquad & y \longmapsto \left(1,\, -\sqrt{\frac{n}{m}},\, 0,\, \sqrt{1+\frac{n}{m}} \cdot y_+\right)
\end{split}
\end{align}
and using some orthogonal isomorphism $\R^2 \oplus \ell^2 \oplus \ell^2 \cong \ell^2$ which takes the first component to the first component. In the above expressions, the components have been constructed precisely in such a way that the conditions for membership in $F_{n+m}$ are satisfied, such that the images of the two maps are elementwise orthogonal, and such that the orthogonality relations within $F_n$ and $F_m$ are preserved.

Showing $F_n \ast F_m \to F_{nm}$ is even easier: we similarly choose some identification $\ell^2 \otimes \ell^2 \cong \ell^2$ with $e_0\otimes e_0 \mapsto e_0$, and simply use the tensor product map $(x,y) \to x\otimes y$.

Concerning the linear-like condition, we first show $\omega(F_n) = n$. For $\omega(F_n) \geq n$, it is enough to consider the standard basis vectors $e_1,\ldots,e_n \in\ell^2$ and $c = n^{-1/2} \sum_{i=1}^n e_i$ in the alternative description mentioned above. We now show $\omega(F_n) \leq n$. Suppose that there were vectors $x^{(1)},\ldots,x^{(n+1)}\in F_n$ which are pairwise orthogonal. Then putting $y := n\, e_0 - \sum_{i=1}^{n+1} x^{(i)}$ results in a vector whose norm squared is given by
\begin{align*}
	\langle y, y\rangle & = n^2 \langle e_0, e_0 \rangle + \sum_{i=1}^{n+1} \langle x^{(i)}, x^{(i)}\rangle - 2 n \sum_{i=1}^{n+1} \langle e_0, x^{(i)} \rangle \\
	& = n^2 + (n+1) \cdot n - 2 \cdot (n+1) \cdot n = -n,
\end{align*}
which is absurd. Overall, we have proven $\omega(F_n) = n$.

It remains to be proven that every flat in $F_n$ is homomorphically equivalent to some $F_k$ for $k \leq n$, which is enough because of $\omega(F_k) = k$. As in \cref{symp_ex}, we consider a family of vertices $\{x^{(i)}\}_{i\in I}$ and show that its orthogonal complement is homomorphically equivalent to some $F_k$. That complement consists of all $y\in F_n$ with $\langle y,x^{(i)}\rangle = 0$ for all $i\in I$. We can assume without loss of generality that the $x^{(i)}$ are linearly independent. Then by induction\footnote{Our argument shows in particular that if there are infinitely many linearly independent $x^{(i)}$, then the complement is empty.}, it is enough to show that the orthogonal complement is isomorphic to $F_{n-1}$ in the case $|I| = 1$, in which case we write $x := x^{(1)}$. Applying an orthogonal transformation which preserves the first component, we can assume $x = (1,-\sqrt{n-1},0,\ldots)$ without loss of generality. Then the orthogonal complement coincides exactly with the image of the embedding
\[
	F_{n-1} \to F_n, \qquad (1,y_+) \mapsto \left(1,\,\sqrt{\frac{1}{n-1}},\,\sqrt{\frac{n}{n-1}}\cdot y_+\right)
\]
which is an instance of~\eqref{lovasz_add} above for $F_{n-1} + F_1 \to F_n$.

We now claim that already the resulting graph invariant $\graphinv{F}(G)$ is closely related to the Lov\'asz number, in that
\beq
\label{eta_lov}
	\graphinv{F}(G) = \lceil\vartheta(\bar{G})\rceil.
\eeq
This is because $F_n$ is isomorphic to the graph of unit vectors in $\ell^2$, where adjacency $x\sim y$ means that
\[
	\langle x, y \rangle = - \frac{1}{n-1}.
\]
One way to see this isomorphism is to map $x \mapsto (1,\sqrt{n-1}\cdot x)$. If we use this definition of $F_n$ together with $n\in\R_+$ being an arbitrary nonnegative number, then it is a known fact that $\vartheta(\bar{G})$ is the infimum over all $n\in\R_+$ with $G \to F_n$~\cite[Theorem~8.1]{KMS}. This proves~\eqref{eta_lov}.

Finally, we claim that both $\graphinvfrac{F}(G)$ and $\graphinvasymp{F}(G)$ coincide with $\vartheta(\bar{G})$. To this end, we prove the inequalities
\[
	\vartheta(\bar{G}) \leq \graphinvfrac{F}(G) \leq \graphinvasymp{F}(G) \leq \vartheta(\bar{G}).
\]
The first inequality follows from $\vartheta(\bar{G}) \leq \graphinvfrac{F}(G) + 1$ by~\eqref{eta_lov}, using as well the fact that $\vartheta(\overline{G\ltimes d}) = d\, \vartheta(\bar{G})$ (as an instance of~\cite[Corollary, \S{21}]{knuth}),
\[
	\vartheta(\bar{G}) = d^{-1} \vartheta(\overline{G\ltimes d}) \leq \frac{\graphinvfrac{F}(G\ltimes d) + 1}{d} = \graphinvfrac{F}(G) + d^{-1},
\]
which implies the claim in the limit $d\to\infty$. The third inequality $\graphinvasymp{F}(G) \leq \vartheta(\bar{G})$ follows directly from the fact that both sides preserve disjunctive powers $G\mapsto G^{\ast n}$. Now \cref{mainthm} recovers some standard properties of the Lov\'asz number of the complement, including some of its multiplicativity properties~\cite[\S{21}]{knuth}.
\end{ex}

In the case of the fractional chromatic number from \cref{frac_chrom}, it is known that the infimum $\graphinvfrac{F}(G) = \inf_d \frac{\graphinv{F}(G\ltimes d)}{d}$ is attained, making $\graphinvfrac{F}(G)$ into a rational number. Since the Lov\'asz number can be irrational, this statement does not generalize to arbitrary linear-like semiring families.

\section{Some open problems}
\label{sec_ops}

Clearly the most interesting open question is:

\begin{prob}
Does every semiring-homomorphic graph invariant $\FGraph\to\R_+$ arise from a linear-like semiring family? If not, are the ones which arise in this way at least dense in the topology of pointwise convergence in the set of functions $\FGraph\to\R_+$?
\label{all_ll}
\end{prob}

Another problem which may also be of interest from the perspective of combinatorial geometries is:

\begin{prob}
Find more examples of linear-like semiring families of graphs.
\end{prob}

The fact that the cases $\field = \C$ and $\field = \R$ in \cref{symm_ex} both define projective rank raises another nontrivial question: 

\begin{prob}
When to two given linear-like semiring families result in the same semiring-homomorphic graph invariants?
\end{prob}

Applying \cref{frac_compare} in both directions provides a sufficient criterion; the separation result for the fractional Haemers bounds of Bukh and Cox~\cite[Theorem~19]{bukh_cox} may provide some hints on how to find conditions which guarantee that two given linear-like semiring families induce different semiring-homomorphic graph invariants.

A closely related question is:

\begin{prob}
\label{fin_vs_infin}
Can the graph invariants reconstructed in \cref{symm_ex,symp_ex,lovasz} also be obtained from linear-like semiring families consisting only of \emph{finite} graphs?
\end{prob}

This is of interest since it would make these semiring families live themselves in the preordered semiring $\FGraph$. In light of \cref{finite_chi}, we could also replace $\FGraph$ by the preordered semiring of all graphs with finite chromatic number, and then we would want to know whether the above invariants can be obtained from linear-like semiring families consisting of graphs with finite chromatic number. We do not know whether the semiring families from \cref{symp_ex} have finite chromatic number (except in special cases, e.g.~for finite fields $\field$). But we do know what happens in the case of \cref{symm_ex} and \cref{lovasz}:

\begin{rem}
The semiring families $(F_n)$ of \cref{symm_ex} have finite chromatic number, as we show now. 

We first prove that if $x,y,z\in\field^n$ are unit vectors with $|\langle x,y\rangle| > 1/\sqrt{2}$ and $|\langle y,z\rangle| > 1/\sqrt{2}$, then also $|\langle x,z\rangle| > 0$. Rescaling by suitable phases implies that we can assume $\langle x,y\rangle > 1/\sqrt{2}$ and $\langle y,z\rangle > 1/\sqrt{2}$. Then the inner product of $w := x + z - \langle x + z, y\rangle y$ with itself evaluates to
\[
	0 \leq \langle w,w\rangle = 2 - (\langle x,y\rangle + \langle y,z\rangle)^2 + 2 \, \mathrm{Re}(\langle x,z\rangle) ,
\]
which indeed implies $|\langle x,z\rangle| \geq \mathrm{Re}(\langle x,z\rangle) > 0$. Hence to show that $F_n$ has finite chromatic number, it is enough to establish the existence of a finite set of unit vectors $b_1,\ldots,b_m \in \field^n\setminus\{0\}$ such that for every other unit vector $x\in\field^n\setminus\{0\}$, there is $i$ with $|\langle x,b_i\rangle| > 1/\sqrt{2}$. In the Euclidean case ($\alpha = \bar{\alpha}$ for all $\alpha\in\field$), we note that the components of every unit vector must lie in the subring $\field_{\mathrm{fin}}\subseteq \field$ of finite elements $\alpha\in\field$ characterized by $-n\leq \alpha \leq n$ for some $n\in\N$. Since $\field_{\mathrm{fin}}$ is totally ordered and contains $\Q$, every $\alpha\in\field_{\mathrm{fin}}$ has a \emph{standard part} $\mathrm{st}(\alpha)\in\R$, where $\mathrm{st} : \field_{\mathrm{fin}} \to \R$ is a ring homomorphism; $\mathrm{st}(\alpha)$ is defined to be the unique real contained in all rational intervals which contain $\alpha$. Since $\mathrm{st}$ is a ring homomorphism, it commutes with the standard inner product, $\mathrm{st}(\langle x,y\rangle) = \langle \mathrm{st}^{\times n}(x),\mathrm{st}^{\times n}(y)\rangle$. Hence it is enough to prove that there are finitely many unit vectors $b_1,\ldots,b_m\in\Q^n$ satisfying the above condition with respect to $x\in\R^n$; but this follows from compactness of the unit sphere and density of rational points on the unit sphere. In the non-Euclidean case, $\field$ is a degree two extension over its Euclidean subfield\footnote{There is $\alpha \neq \bar{\alpha}$. Replacing $\alpha$ by $\alpha - \bar{\alpha}$ shows that we can assume $\alpha$ to be purely imaginary, $\bar{\alpha} = -\alpha$, which means that $\alpha^2$ is in the Euclidean subfield; using the existence of square roots implies that we can assume $\alpha^2 = \pm 1$. Since the only square roots of $1$ are $\pm 1$, we must have $\alpha^2 = -1$. It is now easy to see that every element of $\field$ is a linear combination of $1$ and $\alpha$ over the Euclidean subfield.}, the claim follows in the same way using $b_1,\ldots,b_m\in\Q[i]^n$ satisfying the above condition with respect to all unit vectors $x\in\C^n$.
\end{rem}

The following observation and its proof were communicated to us by David Roberson.

\begin{rem}[Roberson] 
	In \cref{lovasz}, already the graph $F_3$ has infinite chromatic number, and therefore so do all $F_n$ with $n\geq 3$. The reason is that if $F_3$ had finite chromatic number $C\in\N$, then  by~\eqref{eta_lov}, every finite graph $G$ with $\vartheta(\bar{G}) \leq 3$ would satisfy $G\to F_3$, and therefore $\chi(G) \leq C$. But this is absurd, because the Kneser graphs $KG_{3r-1,r}$ satisfy $\vartheta(\overline{KG_{3r-1,r}}) = \frac{3r-1}{r} < 3$ by vertex-transitivity and $\vartheta(\overline{KG_{n,k}}) = |V(KG_{n,k})|\,\vartheta(KG_{n,k})^{-1} = \binom{n}{k} \binom{n-1}{k-1}^{-1} = \frac{n}{k}$~\cite[Theorems~8 and 13]{lovasz}. Taking $n = 3k-1$ therefore gives $\vartheta(\overline{KG_{3k-1,k}}) < 3$, but $\chi(KG_{3k-1,k}) = (3k-1) - 2k + 2 = k + 1$ by the Lov\'asz--Kneser theorem~\cite[Chapter~38]{thebook}, which is unbounded.
\end{rem}

Nevertheless, we do not know whether the Lov\'asz number can be reconstructed from a different linear-like semiring family involving only graphs which are finite or at least have finite chromatic number, so that \cref{fin_vs_infin} is still open.

Last but not least, it is very curious that all known semiring-homomorphic graph invariants are of a fractional nature. Before phrasing our concrete question on this observation, we need to briefly discuss fractional graph homomorphisms. We write $G\frachom H$ if there is a \emph{fractional graph homomorphism} from $G$ to $H$, by which we mean that there is $d\in\N$ such that $G \to (H\ltimes d) / d$, or equivalently $G\ltimes d \to H\ltimes d$. This does not coincide with the existing fractional homomorphism notion characterized by $\omega(G) \leq \omega_f(H)$~\cite[Theorem~7]{BM}, where $\omega_f$ is the fractional clique number, as we will see in \Cref{frac_diff}. For an example of a pair of graphs with a fractional homomorphism in our sense but no homomorphism, we have e.g.~$KG_{6,2} \not\to K_3$ but $KG_{6,2} \frachom K_3$ by \cref{monads}. We now have:

\begin{lem}
	Semiring-homomorphic graph invariants $\graphinvfrac{F}$ constructed via \cref{mainthm} are monotone under fractional graph homomorphisms in our sense.
	\label{frac_mon}
\end{lem}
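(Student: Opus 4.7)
The plan is to unfold the definition of a fractional graph homomorphism and then chase the universal property of $\graphinvfrac{F}$ through the Galois adjunction of \cref{adj}, using \cref{com_denom} to put everything over a common denominator.

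Concretely, assume $G \frachom H$, so by definition there is some $d\in\Nplus$ with $G\ltimes d \to H\ltimes d$. Fix $\eps > 0$ and choose $n, e\in\Nplus$ realizing an $\eps$-approximation to the infimum defining $\graphinvfrac{F}(H)$, i.e.\ with $H \to F_n/e$ and $n/e \leq \graphinvfrac{F}(H) + \eps$. Then I would proceed in three steps:

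First, apply \cref{com_denom} to inflate the target: $F_n/e \to F_{dn}/de$, so $H \to F_{dn}/de$, and by the adjunction $H\ltimes de \to F_{dn}$. Second, blow up the fractional homomorphism itself: functoriality of $(-)\ltimes e$ applied to $G\ltimes d \to H\ltimes d$, combined with $(G\ltimes d)\ltimes e \cong G\ltimes de$ and likewise for $H$ from \cref{Naction}, gives $G\ltimes de \to H\ltimes de$. Third, compose the two to obtain $G\ltimes de \to F_{dn}$, and invoke the adjunction of \cref{adj} one more time to conclude $G \to F_{dn}/de$. This yields $\graphinvfrac{F}(G) \leq \frac{dn}{de} = \frac{n}{e} \leq \graphinvfrac{F}(H) + \eps$, and letting $\eps \to 0$ finishes the proof.

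There is no real obstacle here: the argument is essentially the same ``common denominator'' manipulation used in the subadditivity and submultiplicativity proofs in \cref{subinvs}, and the fact that the factor $d$ coming from the fractional homomorphism cancels cleanly between numerator and denominator in $\frac{dn}{de}$ is exactly why the definition $G\ltimes d \to H\ltimes d$ is the correct notion for $\graphinvfrac{F}$-monotonicity. The only mild subtlety is remembering to apply \cref{com_denom} so that the blow-up factor on $G$ and on $H$ match the denominator $e$ coming from the representation of $H$; once the denominators are aligned, the adjunction and functoriality of $(-)\ltimes d$ do all the work.
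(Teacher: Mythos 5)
Your proof is correct, but it takes a genuinely different route from the paper. The paper's proof is a one-liner: since $G\ltimes d = G\ltimes K_d$ is a lexicographic product, the multiplicativity of $\graphinvfrac{F}$ under lexicographic products (\cref{lex_disj_mult}) gives $d\,\graphinvfrac{F}(G) = \graphinvfrac{F}(G\ltimes d) \leq \graphinvfrac{F}(H\ltimes d) = d\,\graphinvfrac{F}(H)$, and one cancels $d$. You instead run the common-denominator argument directly at the level of the defining infimum, using only \cref{adj}, \cref{Naction}, \cref{com_denom}, and functoriality of blow-up. Each step checks out: $H\ltimes de \to F_{dn}$ from the $\eps$-approximation plus \cref{com_denom}, $G\ltimes de \to H\ltimes de$ from blowing up the fractional homomorphism, and the adjunction then yields $G \to F_{dn}/de$ with $\frac{dn}{de} = \frac{n}{e}$. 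What your approach buys is that it is strictly more elementary and in fact more general: nowhere do you invoke the linear-like condition, so your argument shows that $\graphinvfrac{F}$ is monotone under fractional homomorphisms for an \emph{arbitrary} semiring family, whereas the paper's proof leans on \cref{lex_disj_mult}, whose proof (via the rank-representation characterization of \cref{altetaf}) does require linearity-likeness. What the paper's route buys is brevity, since \cref{lex_disj_mult} is already in place by that point, and it makes the conceptual link between fractional monotonicity and lexicographic multiplicativity explicit --- the same link the paper exploits again in the discussion following \cref{graph_vs_fgraph}.
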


\begin{proof}
	If $G\frachom H$, then $G\ltimes d \to H\ltimes d$ for some $d$, so that the claim follows from \cref{lex_disj_mult}.
\end{proof}

However, we do not know whether this stronger kind of monotonicity applies to all semiring-homomorphic graph invariants:

\begin{prob}
\label{graph_vs_fgraph}
Does $G \frachom H$ imply $\eta(G) \leq \eta(H)$ for every semiring-homomorphic graph invariant $\eta : \FGraph \to \R_+$?
\end{prob}

A negative answer would clearly imply a negative answer to both versions of \cref{all_ll}.

\begin{ex}
\label{frac_diff}
To see that our notion of fractional graph homomorphisms differs from the one of~\cite{BM}, it is enough to show that the semiring-homomorphic invariant $\chi_f$ is not monotone with respect to the latter notion of fractional homomorphisms. For example although $\omega(C_5) = 2 = \omega_f(K_2)$, we clearly have $\chi_f(C_5) = \frac{5}{2} \not\leq 2 = \chi_f(K_2)$.
\end{ex}

Our last open problem was communicated to us by David Roberson:

\begin{prob}[Roberson]
Is every semiring-homomorphic graph invariant multiplicative with respect to lexicographic product?
\end{prob}

As a special case, this would mean $\eta(G\ltimes d) = d \,\eta(G)$ for every semiring-homomorphic $\eta$, and therefore imply a positive answer to \cref{graph_vs_fgraph}, since $G\ltimes d\to H\ltimes d$ then yields $d\,\eta(G) \leq d\,\eta(H)$ and therefore $\eta(G) \leq \eta(H)$. A negative answer would imply a negative answer to both versions of \cref{all_ll} thanks to the multiplicativity with respect to lexicographic product of \cref{mainthm}, together with the fact that the multiplicativity equation is a closed condition in the topology of pointwise convergence.

\bibliographystyle{plain}
\bibliography{asymptotic_graphs}

\end{document}